\documentclass[11pt]{article}

\usepackage[T1]{fontenc}
\usepackage[utf8]{inputenc}
\usepackage{lmodern}
\usepackage[margin=1in]{geometry}
\usepackage{microtype}
\usepackage{amsmath,amssymb,amsthm,mathtools}
\usepackage{aliascnt}
\usepackage{booktabs,tabularx,array,longtable}
\usepackage{float}
\usepackage{graphicx}
\usepackage{tikz}
\usepackage{enumitem}
\usepackage{natbib}
\usepackage{xurl}
\usepackage[colorlinks=true,linkcolor=blue!55!black,citecolor=blue!55!black,urlcolor=blue!55!black]{hyperref}
\usepackage[nameinlink,noabbrev]{cleveref}

\hypersetup{
  pdftitle={Nonexistence of a Leech Tree of Order 18: A Computer-Assisted Proof},
  pdfauthor={Maseeh Ghodsi}
}

\crefname{theorem}{theorem}{theorems}
\Crefname{theorem}{Theorem}{Theorems}
\crefname{lemma}{lemma}{lemmas}
\Crefname{lemma}{Lemma}{Lemmas}
\crefname{proposition}{proposition}{propositions}
\Crefname{proposition}{Proposition}{Propositions}
\crefname{corollary}{corollary}{corollaries}
\Crefname{corollary}{Corollary}{Corollaries}
\crefname{definition}{definition}{definitions}
\Crefname{definition}{Definition}{Definitions}
\crefname{remark}{remark}{remarks}
\Crefname{remark}{Remark}{Remarks}
\crefname{section}{section}{sections}
\Crefname{section}{Section}{Sections}
\crefname{subsection}{section}{sections}
\Crefname{subsection}{Section}{Sections}
\crefname{figure}{figure}{figures}
\Crefname{figure}{Figure}{Figures}
\crefname{table}{table}{tables}
\Crefname{table}{Table}{Tables}
\crefname{appendix}{appendix}{appendices}
\Crefname{appendix}{Appendix}{Appendices}

\pdfmapfile{=lm.map}
\pdfmapfile{+symbols.map}

\usetikzlibrary{positioning,calc}

\newtheorem{theorem}{Theorem}[section]
\newaliascnt{proposition}{theorem}
\newtheorem{proposition}[proposition]{Proposition}
\aliascntresetthe{proposition}
\newaliascnt{lemma}{theorem}
\newtheorem{lemma}[lemma]{Lemma}
\aliascntresetthe{lemma}
\newaliascnt{corollary}{theorem}
\newtheorem{corollary}[corollary]{Corollary}
\aliascntresetthe{corollary}
\theoremstyle{definition}
\newaliascnt{definition}{theorem}

\aliascntresetthe{definition}
\newaliascnt{remark}{theorem}
\newtheorem{remark}[remark]{Remark}
\aliascntresetthe{remark}

\newcommand{\N}{\mathbb{N}}
\newcommand{\mex}{\operatorname{mex}_{+}}
\newcommand{\leanartifact}{\url{https://github.com/chesshippo/leech18-lean-artifact}}
\newcommand{\computationalartifact}{\url{https://github.com/chesshippo/leech18-computational-evidence}}
\newcommand{\sha}{\textsc{sha-256}}
\newcommand{\code}[1]{\texttt{\detokenize{#1}}}
\newsavebox{\reproducecommandbox}

\title{Nonexistence of a Leech Tree of Order 18:\\A Computer-Assisted Proof}
\author{Maseeh Ghodsi\\Independent Researcher\\\texttt{maseeh.ghodsi@gmail.com}}
\date{}

\hypersetup{pdfcreator={LaTeX with pdfTeX}}

\begin{document}
\maketitle

\begin{abstract}
A Leech tree of order $n$ is a tree with positive integral edge weights whose
$\binom n2$ pairwise weighted distances are precisely
$1,2,\ldots,\binom n2$.  We give a computer-assisted proof that no Leech tree
of order $18$ exists.  The argument has three layers.  First, a development in
Lean 4 verifies the structural facts used here.  These facts reduce every
putative example to one of eight local configurations and justify several
necessary conditions.
Second, conventional mathematical arguments prove a component-pair
whole-block exact-cover condition and the completeness of a recursive search.
Third, exhaustive computations close all eight configurations.  The
computation records exact coverage, source and input hashes, terminal receipts,
and independently checked exact-zero results.  The structural layer is
kernel-checked, but the search program, its execution, and the certificate
checker have not been formalized in Lean.  The result is therefore a
computer-assisted proof, not an end-to-end Lean proof.
\end{abstract}

\section{Introduction}

Let $T=(V,E,w)$ be a finite simple tree with vertex set $V$, edge set $E$,
and a strictly positive integral edge-weight function $w$.  The \emph{order}
of $T$ is its number of vertices.  The weighted distance between two vertices
is the sum of the edge weights on their unique connecting path.  Writing
$n=|V|$ and $N=\binom n2$, we call $T$ a \emph{Leech tree} when the
$N$ distances between unordered pairs of distinct vertices are exactly
\[
  1,2,\ldots,N,
\]
each occurring once.  Leech introduced this problem in 1975 and exhibited five
examples \citep{Leech1975}.  A weighted-tree isomorphism is a vertex bijection
that preserves adjacency and edge weights.  Up to this equivalence, the five
known examples have orders $2,3,4,4$, and $6$.  They are shown in
\cref{fig:known-examples}.

\begin{figure}[htbp]
\centering
\begin{tikzpicture}[
  vertex/.style={circle,fill=black,inner sep=1.8pt},
  edge label/.style={fill=white,inner sep=1.2pt,font=\small},
  every edge/.style={draw,thick}
]
  \begin{scope}[shift={(0,1.8)}]
    \node[font=\small] at (1,0.9) {$n=2$};
    \node[vertex] (a) at (0,0) {};
    \node[vertex] (b) at (2,0) {};
    \draw (a) -- node[edge label,above] {$1$} (b);
  \end{scope}
  \begin{scope}[shift={(4,1.8)}]
    \node[font=\small] at (1.5,0.9) {$n=3$};
    \node[vertex] (a) at (0,0) {};
    \node[vertex] (b) at (1.5,0) {};
    \node[vertex] (c) at (3,0) {};
    \draw (a) -- node[edge label,above] {$1$} (b)
              -- node[edge label,above] {$2$} (c);
  \end{scope}
  \begin{scope}[shift={(9,1.8)}]
    \node[font=\small] at (2.1,0.9) {$n=4$};
    \node[vertex] (a) at (0,0) {};
    \node[vertex] (b) at (1.4,0) {};
    \node[vertex] (c) at (2.8,0) {};
    \node[vertex] (d) at (4.2,0) {};
    \draw (a) -- node[edge label,above] {$1$} (b)
              -- node[edge label,above] {$3$} (c)
              -- node[edge label,above] {$2$} (d);
  \end{scope}
  \begin{scope}[shift={(2.1,-1.3)}]
    \node[font=\small] at (1.5,1.6) {$n=4$};
    \node[vertex] (c) at (1.5,0.4) {};
    \node[vertex] (a) at (0,0) {};
    \node[vertex] (b) at (1.5,-0.8) {};
    \node[vertex] (d) at (3,0) {};
    \draw (c) -- node[edge label,above left] {$1$} (a);
    \draw (c) -- node[edge label,right] {$2$} (b);
    \draw (c) -- node[edge label,above right] {$4$} (d);
  \end{scope}
  \begin{scope}[shift={(8,-1.3)}]
    \node[font=\small] at (2.6,1.6) {$n=6$};
    \node[vertex] (u) at (1.7,0.2) {};
    \node[vertex] (v) at (3.5,0.2) {};
    \node[vertex] (a) at (0.2,1.0) {};
    \node[vertex] (b) at (0.2,-0.6) {};
    \node[vertex] (c) at (5.0,1.0) {};
    \node[vertex] (d) at (5.0,-0.6) {};
    \draw (u) -- node[edge label,above] {$5$} (v);
    \draw (u) -- node[edge label,above left] {$1$} (a);
    \draw (u) -- node[edge label,below left] {$2$} (b);
    \draw (v) -- node[edge label,above right] {$4$} (c);
    \draw (v) -- node[edge label,below right] {$8$} (d);
  \end{scope}
\end{tikzpicture}
\caption{The five known Leech trees, up to weighted-tree isomorphism.  An edge
label is its positive integral weight.}
\label{fig:known-examples}
\end{figure}

Taylor proved that the order of a Leech tree is a square or two more than a
square \citep{Taylor1977}.  Sz\'ekely, Wang, and Zhang used computation to
exclude orders $9$ and $11$ and also proved structural bounds on possible
Leech trees \citep{SzekelyWangZhang2005}.  Calhoun, Ferland, Lister, and
Polhill subsequently determined all perfect distance trees of order below
$18$, including the computational exclusion of order $16$, and used the same
least-missing-value recursion \citep{CalhounEtAl2007}.  Thus the forced
least-missing-weight strategy predates this work, and $18$ is the smallest
order not settled in the prior literature.  Varghese, Lakshmanan, and
Arumugam later determined all Leech trees of diameter three and excluded a
particular path-with-pendant family \citep{VargheseEtAl2020}.
At order $18$, $N=153$.

A direct enumeration of complete weighted trees is not feasible at this
order.  There are $123{,}867$ unlabelled tree shapes on $18$ vertices
\citep{OEISA000055}, and each has $17$ physical edges.  In a Leech tree, each
physical edge weight belongs to $I_{153}$, and the physical edge weights are
distinct.  A literal enumeration that assigns distinct values from $I_{153}$
to the edges of every unlabelled shape would therefore generate
\[
  123{,}867\,(153)_{17}
  =
  123{,}867\,\frac{153!}{136!}
  \approx 6.79\times10^{41}
\]
shape-weight assignments.  This is a baseline count for a direct
topology-by-weight enumeration, not a count of nonisomorphic weighted trees:
it includes symmetry-related assignments and assignments eliminated
immediately by elementary necessary conditions.  Even at a hypothetical rate
of $10^{18}$ complete candidates per second, examining this space would take
approximately $2.15\times10^{16}$ years.

Our main result is the following.

\begin{theorem}\label{thm:main}
There is no Leech tree on $18$ vertices.
\end{theorem}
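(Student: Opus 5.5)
We will argue by contradiction: suppose $T$ is a Leech tree of order $18$, so $N=153$. We then derive structural restrictions strong enough that a finite, exhaustive, forced search closes every remaining case. The plan has three stages, matching the layers announced in the abstract.

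\textbf{Step 1: parity and local structure.} First we record the classical facts. Color the vertices by the parity of their distance from a fixed root. A pair at odd distance is a bichromatic pair, so the number of odd distances is $ab$, where $a+b=18$ are the color class sizes. Since $1,\dots,153$ contains exactly $77$ odd numbers, we need $ab=77$, forcing $\{a,b\}=\{7,11\}$. This is Taylor's argument, and it already explains the ``square or square plus two'' condition.

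Next, the weight $1$ must be a physical edge, and so must $2$ (any path of length $2$ would need two edges of weight $1$). We then pin down which small distances are realized by single edges and which by two-edge paths. Inspecting the neighbourhood of the edges of weights $1,2,3,\ldots$ should give a short list of possible local configurations around the smallest weights. I expect this to reduce to the eight configurations referred to in the abstract. These local facts are finite combinatorial statements and are the part suited to Lean verification.

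\textbf{Step 2: a whole-block exact-cover condition.} Deleting an edge $e$ splits $T$ into components of sizes $k$ and $18-k$. The $k(18-k)$ distances crossing $e$ are sums $d(x,u)+w(e)+d(v,y)$. More generally, removing a set of edges splits $T$ into components, and the cross distances between two components form a sumset-like block. The exact-cover requirement says these blocks, together with the internal distances of each component, partition $\{1,\dots,153\}$. I would prove this carefully as a necessary condition that can be checked on partial trees. As soon as two components are complete, their entire cross block is determined and must avoid all distances used so far. This gives strong early pruning. The parity split $7/11$ additionally constrains which blocks contain odd values.

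\textbf{Step 3: the forced least-missing-value search.} Starting from each of the eight configurations, we grow a partial weighted tree. At every stage let $m=\mex$ of the current distance set, that is, the least positive integer not yet realized. Any completion must realize $m$. Since adding vertices only creates paths through new edges, and every new path containing a new edge has length at least that edge's weight, the value $m$ must be realized either by a new pendant edge of weight $m-d(x,v)$ attached at some existing vertex $v$ (for some vertex $x$ with $d(x,v)<m$), or by an existing pair. The second option is impossible by the definition of $m$.

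So we branch over finitely many choices of (attachment vertex, weight). We reject any branch producing a repeated distance, a distance exceeding $153$, or a violation of the parity or exact-cover conditions. The completeness lemma to prove is this: every Leech tree containing the current partial tree as a subtree can be reached by some branch. This holds because the vertex realizing $m$ together with its path to the existing tree can be added edge by edge. Growth by a path rather than a single edge must be handled, since the realizing path may pass through new intermediate vertices. I would resolve this by allowing attachment of the first new edge on that path and observing that its weight is already below $m$, hence forced to be a missing distance, which is a contradiction unless it equals $m$.

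Finally we run the search exhaustively, with hashed inputs and certificates, and confirm zero surviving completions in all eight cases. The main obstacle is computational. The branching factor at order $18$ is large, so the proof's feasibility rests on the Step 2 pruning being strong enough. I would first try ordering the branches by the most-constrained component and applying the block condition incrementally.
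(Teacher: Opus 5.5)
Your Step 3 has a genuine gap. The forcing claim, and with it your completeness lemma, is false.

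Let $S$ be the current partial tree and $m=\mex$ of its distances. In any completion, $m$ is indeed realized by a single physical edge of weight $m$. Your first-new-edge argument shows this when one endpoint of the realizing pair lies in $S$. In general, every proper subpath of the realizing path has weight below $m$, so by distance uniqueness it is already internal, which forces the path to be one edge. But nothing forces that edge to be a pendant edge at an existing vertex. Both of its endpoints may be unplaced, or it may join two pieces that are already built.

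The known order-$6$ Leech tree is a counterexample. Its centres $u,v$ are joined by an edge of weight $5$, with leaves of weights $1,2$ at $u$ and $4,8$ at $v$. After the adjacent edges of weights $1,2$, the least missing value is $4$, but the weight-$4$ edge is disjoint from them. The least missing value is then $5$, and the weight-$5$ edge joins two existing components. A search that only hangs new pendant vertices on existing vertices never reaches this tree. At order $6$ it would wrongly report nonexistence, so a zero count from it at order $18$ proves nothing. This is also why Configurations 1, 5, 6, 7 are disconnected; they cannot be roots of a growing tree.

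The missing idea, which is what the paper uses, is to make the search state the spanning prefix forest on all $18$ vertices, with unplaced vertices as singleton components. For any $F\subseteq E(T)$, the edge of weight $m=\mex(D(F))$ exists, is unique, and has its endpoints in two distinct components of $(V,F)$. So the weight is forced, and the search branches over unordered component pairs (nontrivial--nontrivial, nontrivial--singleton, or singleton--singleton) and one attachment vertex in each. This is exactly the induction behind search completeness.

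The same reformulation is what gives your Step 2 content. In the forest, every pair $K_i,K_j$ must contribute a full block $\{d_i(x,p_{ij})+L_{ij}+d_j(p_{ji},y)\}$ of size $|K_i||K_j|$ inside the unfilled values. The ports and the route length $L_{ij}$ are unknown. Pruning comes from showing that no pairwise-disjoint choice of such blocks exists, not from a block becoming ``determined'' once two components are complete.
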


The proof combines Lean-verified structure, conventional mathematics, and
finite computation.  The computation works with progressively exposed prefix
forests rather than with complete shape-weight assignments.  The first two
physical weights are forced to be $1$ and $2$.  Their two possible incidence
relations, together with the four possible incidences of the next forced edge,
give the eight exhaustive three-edge configurations studied in
\cref{sec:eight-configs}.  These configurations are intuitive, but their
exhaustiveness is proved before they are used as the roots of the computation.

At every subsequent prefix, the internal distances already realized inside
its components force the next physical edge weight: it is the least positive
distance not yet realized internally.  The search therefore chooses only the
two components and attachment vertices joined by that edge.  Exact
weighted-forest automorphisms group symmetric choices, and one representative
of each orbit is retained.

The principal mathematical look-ahead condition used by the computation is
the component-pair whole-block condition in \cref{thm:whole-block}.  In any
completion, every pair of current components accounts for all vertex pairs
having one endpoint in each component.  Their distances must occur together
as a complete additive block determined by the two attachment ports and the
route between them.  The blocks belonging to all component pairs must be
pairwise disjoint and must cover exactly the distances not yet realized inside
the prefix components.  This condition is necessary, not sufficient: passing
the test does not construct a Leech tree, while failure proves that the current
prefix has no Leech completion and the entire branch can be rejected.

Forced-weight recursion, exact orbit reduction, the whole-block condition,
and the remaining necessary tests reduce the problem to a finite search over
highly constrained prefix forests.  The resulting exhaustive computation
visited $8{,}567{,}320{,}605$ recursive nodes across the eight configurations.
This node count concerns partial recursive states, whereas the
$6.79\times10^{41}$ baseline concerns complete shape-weight assignments; the
two figures are not a formal speedup ratio.  Prefix-free partitions, terminal
receipts, and coverage checks certify the exhaustion of the surviving
branches.

The contribution here is an order-$18$ computer-assisted proof that combines the
structural reductions above with component-pair whole-block exact-cover
constraints, exact symmetry reduction, reproducible branch certificates, and
separate certificate checks.

A development in Lean 4 \citep{deMouraUllrich2021} verifies the initial
structural reduction and the physical-edge identities used by the search.  The
whole-block pruning theorem and the search-completeness induction are proved
mathematically in this paper but are not formalized in Lean.  The C++
enumeration and its certificate checkers are also outside the Lean kernel.
Precise trust boundaries appear in \cref{sec:formal-status}.

\section{Preliminaries}\label{sec:preliminaries}

\subsection{Notation and basic terminology}

We use $\mathbb Z$ for the integers and
\[
  \N=\{0,1,2,\ldots\},
  \qquad
  \N_{>0}=\{1,2,3,\ldots\}.
\]
For a finite set $S$, the symbol $|S|$ denotes its cardinality.  For a set
$X$, we write
\[
  \binom{X}{2}=\bigl\{\{x,y\}:x,y\in X,\ x\ne y\bigr\}
\]
for the set of unordered pairs of distinct elements of $X$.  Thus
$\binom n2=n(n-1)/2$ is the number of such pairs when $|X|=n$.  For sets
$A$ and $B$, $A\times B$ denotes their Cartesian product.  All set unions,
intersections, and differences have their usual meanings.

Throughout the paper, $T=(V,E,w)$ denotes a finite simple tree with
$w:E\to\N_{>0}$.  Its order is $n=|V|$, and
\[
  N=\binom n2,
  \qquad
  I_N=\{1,2,\ldots,N\}.
\]
The set $I_N$ is called the \emph{target interval}.  For vertices $x,y\in V$,
let $P_{xy}\subseteq E$ be the edge set of the unique simple path between
them, and define
\[
  d_T(x,y)=\sum_{e\in P_{xy}}w(e).
\]
The \emph{distance spectrum} is the multiset of values $d_T(x,y)$ indexed by
$\{x,y\}\in\binom V2$.  The tree is a Leech tree precisely when this indexed
family is a bijection from $\binom V2$ to $I_N$.

An edge of $T$ is called a \emph{physical edge} when it must be distinguished
from an edge of a quotient or search tree.  We write $e=ab$ to say that $a$
and $b$ are the endpoints of the physical edge $e$.  Since a physical edge is
itself a vertex-pair path, all physical weights of a Leech tree lie in $I_N$
and are distinct.

For $q\in\N_{>0}$, the \emph{smaller-weight prefix forest at $q$}, denoted
$F_{<q}$, has vertex set $V$ and precisely the physical edges $e$ satisfying
$w(e)<q$.  An edge in $F_{<q}$ is \emph{exposed}; a physical edge not yet in
$F_{<q}$ is \emph{unexposed}.  Vertices not incident with an exposed edge are
singleton components.  If $K$ is a component of a prefix forest, then
$d_K(x,y)$ denotes the weighted distance between $x,y\in K$ using the exposed
edges of $K$.  An \emph{internal prefix distance} is a value $d_K(x,y)$ for
two distinct vertices in the same component $K$.

\begin{lemma}[Prefix components are subtrees]\label{lem:prefix-subtree}
Let $T$ be a tree and $F\subseteq E(T)$.  Each component $K$ of the spanning
subforest $(V,F)$ is a subtree of $T$, and for all $x,y\in K$ the unique
$x$--$y$ path in $T$ lies in $K$; consequently
$d_K(x,y)=d_T(x,y)$.
\end{lemma}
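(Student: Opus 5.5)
The plan is to use only the uniqueness of paths in a tree. Fix a component $K$ of $(V,F)$ and two vertices $x,y\in K$. Since $K$ is connected in $(V,F)$, there is a walk from $x$ to $y$ using only edges of $F$ with both endpoints in $K$. From this walk we extract a simple path $Q$ in the usual way, by deleting closed subwalks between repeated vertices. The path $Q$ still uses only edges of $F\cap E(K)$. It is also a simple $x$--$y$ path in $T$, so by uniqueness of simple paths in a tree, $Q=P_{xy}$. Hence every edge of $P_{xy}$ lies in $F$ and has both endpoints in $K$. Every vertex of $P_{xy}$ lies in $K$ as well, since consecutive vertices are joined by $F$-edges and $K$ is closed under $F$-adjacency.

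Next I would show that $K$, with the edges of $F$ inside it, is a subtree of $T$. It is connected by definition of a component. It is acyclic because any cycle in it would be a cycle in $T$. It is therefore a tree whose vertex and edge sets are contained in those of $T$. Its internal path between $x$ and $y$ is the unique simple path in $K$, which by the previous paragraph equals $P_{xy}$.

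The distance identity then follows directly. By definition, $d_K(x,y)$ is the sum of the weights along the unique $K$-path. That path is $P_{xy}$, with the same weights $w(e)$ because $F\subseteq E$. So
\[
  d_K(x,y)=\sum_{e\in P_{xy}}w(e)=d_T(x,y).
\]

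There is no real obstacle in this argument. The only point that needs care is the walk-to-path reduction, and checking that it never leaves $F$; deleting closed subwalks clearly preserves this property.
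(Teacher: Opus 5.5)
Your proof is correct and follows the same route as the paper: connectivity of $K$ gives an $x$--$y$ path inside $K$, and uniqueness of paths in the tree $T$ identifies it with $P_{xy}$, so the weighted distances agree. Beyond the paper's two-line argument, you also spell out the walk-to-path reduction, the acyclicity of $K$, and the fact that the edges of $K$ carry the weights given by $w$.
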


\begin{proof}
The component $K$ is connected, so it contains an $x$--$y$ path.  Since $T$
is a tree, that path is the unique $x$--$y$ path in $T$.
\end{proof}

A prefix forest is called \emph{valid} if all of its internal distances,
taken together across all components, are pairwise distinct and belong to
$I_N$.  A \emph{completion} of a prefix is a Leech tree on the same vertex
set that contains every exposed edge with its displayed weight.  For a set
$D\subseteq\mathbb Z$, the \emph{positive minimum excluded value} is
\[
  \mex(D)=\min\bigl(\N_{>0}\setminus D\bigr).
\]
Thus $\operatorname{mex}_{+}(D)$ is the least strictly positive integer not in
$D$; the subscript $+$ distinguishes it from a convention in which the
minimum excluded value may be $0$.

\begin{lemma}[First physical weights]\label{lem:first-weights}
If $n\ge 3$, the physical weights $1$ and $2$ each occur exactly once.
\end{lemma}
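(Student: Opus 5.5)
Since the family $(d_T(x,y))_{\{x,y\}\in\binom V2}$ is a bijection onto $I_N$ with $N=\binom n2\ge 3$, the values $1$ and $2$ are each realized by exactly one vertex pair. The plan is to show that the pair realizing distance $1$ and the pair realizing distance $2$ are both adjacent, so that these distances are physical edge weights. Distinctness of physical weights, noted in the preliminaries, then gives that each occurs exactly once.

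\textbf{Weight $1$.} Let $\{x,y\}$ be the pair with $d_T(x,y)=1$. The path $P_{xy}$ is nonempty and every edge weight is a positive integer, so $1=\sum_{e\in P_{xy}}w(e)\ge|P_{xy}|$. Hence $|P_{xy}|=1$, and $xy$ is a physical edge of weight $1$.

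\textbf{Weight $2$.} Let $\{u,v\}$ be the pair with $d_T(u,v)=2$. The same bound gives $|P_{uv}|\le 2$. The case to rule out is a path $u\,m\,v$ of length two whose two edges both have weight $1$. That would make $d_T(u,m)=d_T(m,v)=1$ for two distinct pairs, contradicting the uniqueness of distance $1$. So $|P_{uv}|=1$, and $uv$ is a physical edge of weight $2$.

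\textbf{Conclusion.} Any physical edge of weight $1$ or $2$ is itself a vertex pair at that distance. By uniqueness of each distance, no further edge can carry either weight. The hypothesis $n\ge3$ is used only to guarantee $N\ge3$, so that $2\in I_N$ (for $n=2$ the weight $2$ does not occur).

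There is no real obstacle here. The only step needing care is excluding the two-edge path of weight $1+1$, which is handled directly by the injectivity of the distance map.
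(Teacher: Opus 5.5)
Your proof is correct and follows essentially the same route as the paper. Positivity forces distance $1$ onto a single edge, the only two-edge decomposition $1+1$ of distance $2$ is excluded by uniqueness of distance $1$ (the paper phrases this as distinctness of physical weights), and uniqueness of each weight follows from injectivity of the distance map; your remark that $n\ge3$ is used only to ensure $2\in I_N$ makes explicit a point the paper leaves implicit.
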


\begin{proof}
The distance $1$ must be realized by a path.  Positivity forces that path to
be one physical edge of weight $1$.  A path of total weight $2$ is either one
edge of weight $2$ or two edges of weight $1$.  The latter is impossible
because physical weights are distinct.  Hence weight $2$ also occurs as a
physical edge, and uniqueness follows from uniqueness of every distance.
\end{proof}

\begin{lemma}[Forced least missing distance]\label{lem:forced-mex}
Let $T$ be a Leech tree of order $n$, and let $F\subseteq E(T)$ be any set of
physical edges.  Write $D(F)$ for the set of internal distances in the
spanning subforest $(V,F)$, and put $m=\mex(D(F))$.
\begin{enumerate}[label=(\roman*)]
  \item\label{item:forced-mex-forward} If $e$ is a physical edge of weight
  $q$ and $F=F_{<q}$, then $\mex(D(F))=q$.
  \item\label{item:forced-mex-converse} If $m\le N$, then $T$ has an edge of
  weight exactly $m$.  That edge is unique, lies outside $F$, and is a
  minimum-weight edge of $T$ among those outside $F$.
\end{enumerate}
\end{lemma}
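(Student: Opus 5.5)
We prove both parts by comparing internal prefix distances with distances in $T$, using \cref{lem:prefix-subtree}. Two basic facts do most of the work. First, every internal distance of $(V,F)$ is a distance of $T$. Second, since $T$ is a Leech tree, distinct vertex pairs have distinct $T$-distances, and these fill $I_N$.

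For \ref{item:forced-mex-forward}, put $F=F_{<q}$. We first show $q\notin D(F)$. An internal distance equal to $q$ would be $d_T(x,y)$ for some pair $\{x,y\}$ whose path lies inside a component of $F$, by \cref{lem:prefix-subtree}. The edge $e$ is a one-edge path of length $q$, and $e\notin F$ because $w(e)=q\not<q$. So the pair $\{x,y\}$ differs from the endpoint pair of $e$, yet has the same distance, contradicting uniqueness of distances.

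Next we show every $r$ with $1\le r<q$ lies in $D(F)$. Since $q$ is a distance, $q\le N$, so $r\in I_N$ and some pair $\{x,y\}$ has $d_T(x,y)=r$. Every edge on $P_{xy}$ has weight at most $r<q$, by positivity of the weights, so every such edge is exposed. Hence $x$ and $y$ lie in one component of $F_{<q}$, and $r=d_K(x,y)\in D(F)$. Together these give $\mex(D(F))=q$.

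For \ref{item:forced-mex-converse}, let $m\le N$. Then $m$ is realized by some pair $\{x,y\}$. If $P_{xy}$ had all its edges in $F$, the pair would lie in one component of $F$ and $m$ would be an internal distance, which it is not. So $P_{xy}$ contains an edge $f\notin F$, and $w(f)\le m$.

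The main obstacle is to upgrade this to an edge of weight exactly $m$. Suppose the contrary, and let $f$ be an edge outside $F$ of minimum weight $q<m$; such an edge exists because $w(f)\le m$. The edge $f$ realizes the distance $q$. Since $q<m=\mex(D(F))$, the value $q$ is also an internal distance of $F$. By \cref{lem:prefix-subtree}, that internal distance is $d_T$ of some pair whose path lies in $F$, hence a pair different from the endpoint pair of $f$. This contradicts uniqueness of distances. Therefore every edge outside $F$ has weight at least $m$.

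Combined with $w(f)\le m$, this gives $w(f)=m$. So $T$ has an edge of weight exactly $m$, lying outside $F$, and it is a minimum-weight edge among those outside $F$. It is unique because physical weights are distinct.

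Note that \ref{item:forced-mex-converse} does not assume $F=F_{<q}$. The argument above still works because it uses only that internal distances of $F$ are $T$-distances of pairs whose paths lie in $F$.
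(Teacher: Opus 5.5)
Part (i) is the paper's argument. In part (ii) your proof is correct but reaches existence by a different route.

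The paper takes the path $P$ realizing $m$ and notes that every proper subpath has weight below $m$, hence in $D(F)$. Uniqueness of distances together with \cref{lem:prefix-subtree} then puts each such subpath inside $F$. Applying this to the two overlapping subpaths $v_0\cdots v_{k-1}$ and $v_1\cdots v_k$ shows that $k\ge 2$ would place all of $P$ in $F$, so $P$ is a single edge. Only afterwards does the paper prove minimality, by a separate argument.

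You reverse the order. You first prove that every edge outside $F$ has weight at least $m$, which is exactly the paper's minimality argument. Existence then follows by squeezing against an edge $f$ of $P_{xy}$ outside $F$, which has $w(f)\le m$ by positivity. This lets one argument do double duty and avoids the subpath bookkeeping. The paper's version shows directly that the realizing path is a single edge, though in your version that also follows at once from positivity.

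Two presentational points. The ``suppose the contrary'' framing and the choice of a minimum-weight $q$ are superfluous. Your contradiction applies verbatim to any edge outside $F$ of weight below $m$, which is exactly the unconditional statement you then conclude. You should also avoid reusing the letter $f$ for two different edges.
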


\begin{proof}
For \ref{item:forced-mex-forward}, the value $q$ is not in $D(F)$, since it is
already the distance between the endpoints of $e$ and all distances in a
Leech tree are unique.  Now fix $r<q$.  The distance $r$ occurs in the final
tree.  Every edge on the path realizing $r$ has positive weight at most $r$,
hence has weight less than $q$.  That path is therefore internal to one prefix
component, so $r\in D(F)$.

For \ref{item:forced-mex-converse}, bijectivity gives distinct vertices
$x,y$ with $d_T(x,y)=m$; let $P$ be their path.  Every proper subpath of $P$
has weight less than $m$, so its weight lies in $D(F)$.  Some pair of vertices
in one component of $(V,F)$ therefore has the same distance.  Distance
uniqueness identifies that pair with the endpoints of the subpath, and
\cref{lem:prefix-subtree} then places the entire subpath in $F$.  If $P$ had
$k\ge2$ edges $v_0v_1,\ldots,v_{k-1}v_k$, applying this observation to the
two proper subpaths from $v_0$ to $v_{k-1}$ and from $v_1$ to $v_k$ would
place every edge of $P$ in $F$.  This would give $m\in D(F)$, contrary to the
definition of $m$.  Thus $P$ consists of one physical edge of weight $m$,
which necessarily lies outside $F$.

For minimality, suppose $e'\notin F$ had $w(e')<m$.  Then $w(e')\in D(F)$,
so some pair inside a component of $F$ realizes it.  By bijectivity that pair
is the endpoint pair of $e'$, and \cref{lem:prefix-subtree} forces $e'\in F$,
a contradiction.  Uniqueness follows from distinctness of distances.
\end{proof}

The Lean development verifies both preceding lemmas in their full formal
setting, together with the following proposition.

\begin{proposition}[Persistent merge block]\label{prop:persistent-block}
Let a physical edge $e=ab$ of a Leech tree $T$, with weight $q$, join two
components $A$ and $B$ of the smaller-weight prefix.  The indexed values
\[
  d_A(x,a)+q+d_B(b,y),\qquad (x,y)\in A\times B,
\]
are pairwise distinct, lie in $\{1,\ldots,N\}$, avoid all preceding internal
distances, and remain internal distances at every later prefix.
\end{proposition}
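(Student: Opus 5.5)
The whole proof rests on one identity: each displayed value is an actual tree distance, namely $d_T(x,y)$.

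Fix $(x,y)\in A\times B$ with $A\ni a$ and $B\ni b$, both components of $F_{<q}$. By \cref{lem:prefix-subtree}, the $x$--$a$ path in $T$ lies in $A$ and has weight $d_A(x,a)=d_T(x,a)$. Likewise the $b$--$y$ path lies in $B$ and has weight $d_B(b,y)=d_T(b,y)$.

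The walk formed by the $x$--$a$ path, the edge $e$, and the $b$--$y$ path is a simple path. Its first part lies in $A$, its last part in $B$, and $A\cap B=\emptyset$ since they are distinct components. The edge $e$ is not an edge of $F_{<q}$, because $w(e)=q\not<q$. By uniqueness of paths in a tree, it is therefore the $x$--$y$ path, and $d_T(x,y)=d_A(x,a)+q+d_B(b,y)$. All the claims now follow from this identity.

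\begin{itemize}
\item \emph{Pairwise distinct.} Distinct pairs $(x,y)\ne(x',y')$ give distinct unordered vertex pairs $\{x,y\}\ne\{x',y'\}$, since $x,x'\in A$, $y,y'\in B$ and $A\cap B=\emptyset$. Bijectivity of the Leech distance map then makes their distances distinct.
\item \emph{In range.} Every distance of a Leech tree lies in $\{1,\dots,N\}$.
\item \emph{Avoids preceding internal distances.} A preceding internal distance is $d_K(u,v)=d_T(u,v)$ for some pair $u,v$ inside a single component $K$ of $F_{<q}$. The pair $\{x,y\}$ straddles two components, so it differs from $\{u,v\}$, and bijectivity makes the values differ. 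I would read ``preceding'' as internal distances of $F_{<q}$; any earlier prefix $F_{<q'}$ with $q'\le q$ has components refining those of $F_{<q}$, so its internal distances are among these anyway.
\item \emph{Persists.} At any later prefix $F_{<q'}$ with $q'>q$, all edges of the $x$--$y$ path have weight at most $q<q'$, so the whole path lies in one component $K'$. Then $d_{K'}(x,y)$ equals the same value, again by \cref{lem:prefix-subtree}.
\end{itemize}

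The only step needing care is the identification of the concatenated path as the unique $x$--$y$ path. Its pieces are edge-disjoint and vertex-disjoint apart from the junctions at $a$ and $b$, and this uses that $A$ and $B$ are distinct components and that $e$ lies in neither of them. Everything else is bookkeeping with bijectivity. I would present this identification as a short standalone claim before deriving the four properties.
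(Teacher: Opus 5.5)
Your proposal is correct and follows the paper's proof. Both first identify each displayed value with $d_T(x,y)$ via \cref{lem:prefix-subtree} and uniqueness of tree paths, then take distinctness, range, and avoidance from the Leech bijection, and persistence from the fact that the whole $x$--$y$ path is exposed once $e$ is. You additionally spell out the simple-path and unordered-pair details that the paper leaves implicit.
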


\begin{proof}
By \Cref{lem:prefix-subtree}, the paths within $A$ and $B$ are their paths in
$T$.  Thus each displayed sum is the distance $d_T(x,y)$, because the
unique path from $x$ to $y$ uses $e$.  The Leech property gives injectivity,
range, and disjointness from all other indexed pairs.  Once $e$ is exposed,
$x$ and $y$ remain in the same prefix component, which proves persistence.
\end{proof}

\subsection{Taylor's order and parity restriction}

Taylor proved that the order $n$ of a Leech tree must have the form $k^2$ or
$k^2+2$ for some $k\in\N_{>0}$ \citep{Taylor1977}.  The accompanying Lean
development includes a formalization of this result.

For the order-$18$ parity condition used by our search, fix any root $o\in V$
and partition $V$ according to whether $d_T(o,v)$ is even or odd; write the
class sizes as $a$ and $b$, so $a+b=18$.  If $m$ is the meet of $x$ and $y$
in the tree rooted at $o$, then
\[
 d_T(x,y)=d_T(o,x)+d_T(o,y)-2d_T(o,m)
 \equiv d_T(o,x)+d_T(o,y)\pmod 2.
\]
Thus $d_T(x,y)$ is odd precisely when $x$ and $y$ lie in different classes,
so the number of odd distances is $ab$.  The odd values in $I_{153}$ number
$(153+1)/2=77$, and bijectivity gives $ab=77$.  Together with $a+b=18$, this
gives $t^2-18t+77=0$, whose discriminant is $324-308=16$; hence
$\{a,b\}=\{7,11\}$.  Consistently, the number of even pairs is
$\binom72+\binom{11}2=21+55=76$.  Changing the root preserves the partition
or swaps its classes, since
\[
 d_T(o,v)\equiv d_T(o,o')+d_T(o',v)\pmod 2.
\]
The unordered pair of class sizes is therefore root-independent.  We call
the search condition requiring this pair the \emph{$7/11$ parity condition}.

\begin{remark}
This is the order-$18$ case of Taylor's parity restriction
\citep{Taylor1977}.  The order restriction and its order-$18$ parity
consequence are formalized in the accompanying Lean development.
\end{remark}

The \emph{hop length} of a path is its number of physical edges, and the
maximum hop length over all simple paths is the \emph{hop diameter}.

\begin{lemma}[Hop-diameter bound]\label{lem:hop-diameter}
Every simple path in an order-$18$ Leech tree contains at most $14$ physical
edges.
\end{lemma}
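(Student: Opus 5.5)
The plan is to reduce the statement to a known fact about Golomb rulers. Suppose, for contradiction, that an order-$18$ Leech tree $T$ contains a simple path $v_0v_1\cdots v_k$ with $k\ge 15$ physical edges. Any subpath of a simple path is again a simple path, so we may truncate and assume $k=15$ exactly. The path then has $16$ vertices.

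Next, place the path on a line. Set $x_i=d_T(v_0,v_i)$ for $0\le i\le 15$. Positivity of the weights gives $0=x_0<x_1<\cdots<x_{15}$. For $i<j$, the unique $v_i$--$v_j$ path in $T$ is the subpath of our path, so $d_T(v_i,v_j)=x_j-x_i$. The Leech property says that the distances over the $\binom{16}{2}=120$ pairs $\{v_i,v_j\}$ are pairwise distinct. Hence all differences $x_j-x_i$ are distinct, and $\{x_0,\ldots,x_{15}\}$ is a Golomb ruler with $16$ marks. Its length is $x_{15}=d_T(v_0,v_{15})$. This is one of the distances of $T$, so it lies in $I_{153}$ and satisfies $x_{15}\le N=153$.

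The contradiction then comes from the known value of the optimal Golomb ruler with $16$ marks, whose minimum length is $177>153$. This value was determined by exhaustive search, completed by the distributed.net OGR project. It would be cited as an external computational fact, in the same spirit as the other computer-verified inputs of the paper. For context, the $15$-mark optimum is $151\le 153$, which explains why the bound stops at $14$ edges. This step is the main obstacle, because elementary counting falls far short. The only easy lower bound is that the $120$ distinct positive differences force $x_{15}\ge 120$, and $120\le 153$ gives nothing. So the proof genuinely relies on the Golomb-ruler optimality result, and the reliance should be stated explicitly.

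If an argument internal to the paper is preferred, I would instead try to strengthen the counting with further necessary conditions. One source is the remaining $2$ vertices, whose distances to the path must fill the gaps left in $I_{153}$. Another is the $7/11$ parity condition, since the path vertices split by parity of $x_i$ and each parity class has size at most $11$. I expect these refinements to be insufficient on their own, so the primary plan is the reduction to the $16$-mark Golomb ruler bound.
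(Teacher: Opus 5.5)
Your proof is correct, but it closes the argument by a different route from the paper. Truncating to a $15$-edge subpath and setting $x_i=d_T(v_0,v_i)$ gives a $16$-mark Golomb ruler of length $x_{15}\le 153$, which contradicts the known optimum $177$.

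The paper makes the same truncation, then finishes by elementary counting rather than an external computation. Write the gaps as $a_1,\ldots,a_{15}$. The $15+14+13=42$ subpaths of one, two, or three consecutive edges are distinct vertex pairs, so their weights sum to $S\ge 1+\cdots+42=903$. Each gap other than the first two and last two lies in six of these subpaths, and $a_1,a_2,a_{14},a_{15}$ fall short of six by $3,1,1,3$. Since those four gaps are distinct positive integers, $3a_1+a_2+a_{14}+3a_{15}\ge16$, which gives $S=6\sum_i a_i-(3a_1+a_2+a_{14}+3a_{15})\le 6\cdot153-16=902$.

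In your language, this is a self-contained proof that every $16$-mark Golomb ruler has length at least $154$. Your claim that elementary counting ``falls far short'' is therefore mistaken. The bound $x_{15}\ge 120$ is weak only because it uses distinctness alone. It ignores that the short differences are sums of a few consecutive gaps, and so are tied to the total length.

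What each route buys:
\begin{itemize}
\item The paper's argument needs no outside computation and is the one formalized in Lean, though its margin is exactly one.
\item Your argument has a large margin and a clean identification with Golomb rulers, but it adds an exhaustive search to the trusted base.
\item The parity and extra-vertex refinements you sketch are unnecessary under either route.
\end{itemize}

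If you keep your version, fix the citation. The $16$-mark optimum was established by computer search well before the distributed.net OGR project, which handled much larger orders.
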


\begin{proof}
Suppose that a simple path has at least $15$ edges.  Choose a consecutive
$15$-edge subpath and write its edge weights in path order as
\[
  a_1,a_2,\ldots,a_{15}.
\]
Consider all consecutive subpaths containing one, two, or three edges.  Their
number is
\[
  15+14+13=42.
\]
These subpaths correspond to $42$ distinct unordered pairs of vertices.
Their weights are therefore $42$ distinct positive integers by the Leech
property.  If $S$ denotes their sum, then
\[
  S\ge 1+2+\cdots+42=903.
\]

We next count the contribution of each edge weight to $S$.  Every edge not
among the first two or last two occurs in six selected subpaths: one of length
one, two of length two, and three of length three.  The first, second,
fourteenth, and fifteenth edges have respective deficits $3,1,1,3$ from this
multiplicity.  Hence
\[
  S
  =6\sum_{i=1}^{15}a_i
   -\left(3a_1+a_2+a_{14}+3a_{15}\right).
\]
The complete $15$-edge subpath is itself a vertex-pair path, so its weight
belongs to $I_{153}$ and
\[
  \sum_{i=1}^{15}a_i\le153.
\]
The four boundary weights $a_1,a_2,a_{14},a_{15}$ are distinct positive
integers.  The expression with coefficients $3,1,1,3$ is minimized by placing
$1$ and $2$ at the coefficient-$3$ positions and $3$ and $4$ at the
coefficient-$1$ positions.  Therefore
\[
  3a_1+a_2+a_{14}+3a_{15}\ge16.
\]
Consequently,
\[
  S\le6\cdot153-16=902,
\]
contradicting $S\ge903$.  Thus no simple path contains $15$ edges.  Any longer
path contains a consecutive $15$-edge subpath, so every simple path has at
most $14$ edges.
\end{proof}

This is also the route formalized in Lean.  The contradiction has margin
exactly one: the lower bound is $903$, whereas the upper bound is $902$.

\section{The eight initial configurations}\label{sec:eight-configs}

Let $e_j$ denote the physical edge of weight $j$, when it exists.  Two
physical edges are \emph{adjacent} if they share a vertex.  The smaller-weight
prefix after three physical edges has one of the eight forms in
\cref{fig:eight-configs}.  The pictured vertices and components are only the
exposed local forest.  Any pictured vertex may receive further incident edges
in a completion.

The eight configurations can be seen intuitively before doing any calculation.
Draw $e_1$ and $e_2$ either adjacent or disjoint.  In the adjacent case the
next forced physical edge is $e_4$, while in the disjoint case it is $e_3$.
That new edge can meet neither exposed edge, just the first, just the second,
or both.  The two initial choices and four attachment choices give
$2\cdot4=8$ pictures.  We nevertheless state and prove the classification
explicitly because these eight cases are the exhaustive seeds of the
computation.

\begin{figure}[htbp]
\centering
\begin{tikzpicture}[
  scale=0.96,
  vertex/.style={circle,fill=black,inner sep=1.7pt},
  edge label/.style={fill=white,inner sep=1pt,font=\small},
  title/.style={font=\small\bfseries},
  every edge/.style={draw,thick}
]
  \begin{scope}[shift={(0,9)}]
    \node[title] at (2.2,1.0) {Configuration 1};
    \node[vertex] (a) at (0,0) {}; \node[vertex] (b) at (1.3,0) {};
    \node[vertex] (c) at (2.6,0) {}; \node[vertex] (d) at (3.5,0) {};
    \node[vertex] (e) at (4.8,0) {};
    \draw (a)--node[edge label,above]{$1$}(b)--node[edge label,above]{$2$}(c);
    \draw (d)--node[edge label,above]{$4$}(e);
  \end{scope}
  \begin{scope}[shift={(7.5,9)}]
    \node[title] at (2.0,1.0) {Configuration 2};
    \node[vertex] (a) at (0,0) {}; \node[vertex] (b) at (1.3,0) {};
    \node[vertex] (c) at (2.6,0) {}; \node[vertex] (d) at (3.9,0) {};
    \draw (a)--node[edge label,above]{$4$}(b)--node[edge label,above]{$1$}(c)
              --node[edge label,above]{$2$}(d);
  \end{scope}
  \begin{scope}[shift={(0,6)}]
    \node[title] at (2.0,1.0) {Configuration 3};
    \node[vertex] (a) at (0,0) {}; \node[vertex] (b) at (1.3,0) {};
    \node[vertex] (c) at (2.6,0) {}; \node[vertex] (d) at (3.9,0) {};
    \draw (a)--node[edge label,above]{$1$}(b)--node[edge label,above]{$2$}(c)
              --node[edge label,above]{$4$}(d);
  \end{scope}
  \begin{scope}[shift={(7.5,6)}]
    \node[title] at (2.0,1.0) {Configuration 4};
    \node[vertex] (c) at (2,0) {}; \node[vertex] (a) at (0.4,0.5) {};
    \node[vertex] (b) at (2,-1.0) {}; \node[vertex] (d) at (3.6,0.5) {};
    \draw (c)--node[edge label,above left]{$1$}(a);
    \draw (c)--node[edge label,right]{$2$}(b);
    \draw (c)--node[edge label,above right]{$4$}(d);
  \end{scope}
  \begin{scope}[shift={(0,3)}]
    \node[title] at (2.4,1.0) {Configuration 5};
    \node[vertex] (a) at (0,0) {}; \node[vertex] (b) at (1.1,0) {};
    \node[vertex] (c) at (1.8,0) {}; \node[vertex] (d) at (2.9,0) {};
    \node[vertex] (e) at (3.6,0) {}; \node[vertex] (f) at (4.7,0) {};
    \draw (a)--node[edge label,above]{$1$}(b);
    \draw (c)--node[edge label,above]{$2$}(d);
    \draw (e)--node[edge label,above]{$3$}(f);
  \end{scope}
  \begin{scope}[shift={(7.5,3)}]
    \node[title] at (2.2,1.0) {Configuration 6};
    \node[vertex] (a) at (0,0) {}; \node[vertex] (b) at (1.3,0) {};
    \node[vertex] (c) at (2.6,0) {}; \node[vertex] (d) at (3.5,0) {};
    \node[vertex] (e) at (4.8,0) {};
    \draw (a)--node[edge label,above]{$1$}(b)--node[edge label,above]{$3$}(c);
    \draw (d)--node[edge label,above]{$2$}(e);
  \end{scope}
  \begin{scope}[shift={(0,0)}]
    \node[title] at (2.2,1.0) {Configuration 7};
    \node[vertex] (a) at (0,0) {}; \node[vertex] (b) at (1.3,0) {};
    \node[vertex] (c) at (2.2,0) {}; \node[vertex] (d) at (3.5,0) {};
    \node[vertex] (e) at (4.8,0) {};
    \draw (a)--node[edge label,above]{$1$}(b);
    \draw (c)--node[edge label,above]{$2$}(d)--node[edge label,above]{$3$}(e);
  \end{scope}
  \begin{scope}[shift={(7.5,0)}]
    \node[title] at (2.0,1.0) {Configuration 8};
    \node[vertex] (a) at (0,0) {}; \node[vertex] (b) at (1.3,0) {};
    \node[vertex] (c) at (2.6,0) {}; \node[vertex] (d) at (3.9,0) {};
    \draw (a)--node[edge label,above]{$1$}(b)--node[edge label,above]{$3$}(c)
              --node[edge label,above]{$2$}(d);
  \end{scope}
\end{tikzpicture}
\caption{The eight exhaustive three-edge prefix forests.  The diagrams show
only exposed edges; further edges may be incident with any displayed vertex.}
\label{fig:eight-configs}
\end{figure}

\begin{theorem}[Eight-configuration classification]\label{thm:eight-configs}
For $n\ge5$, the first three physical edges of a Leech tree have one of the
forms in \cref{tab:eight-configs}.  These eight forms are exhaustive.  The
hypothesis $n\ge5$ ensures $N\ge10$, so all least missing values through $7$
used below lie in $I_N$.
\end{theorem}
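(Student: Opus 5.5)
The plan is to run the forced least-missing-value recursion of \cref{lem:forced-mex} by hand for three steps. At each step we compute the internal distance set $D$ of the current prefix, and part \ref{item:forced-mex-converse} then names the next physical edge. Part \ref{item:forced-mex-forward} certifies that this edge is the next one in weight order, i.e.\ that the prefix $F_{<q}$ is exactly what has been drawn. By \cref{lem:first-weights}, $e_1$ and $e_2$ exist and are unique, and $F_{<3}=\{e_1,e_2\}$. Since $T$ is a simple tree, $e_1$ and $e_2$ either share exactly one vertex or are vertex-disjoint. This gives the two top-level cases.

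\emph{Adjacent case.} The prefix $\{e_1,e_2\}$ is a path with internal distances $\{1,2,3\}$, so $m=\mex(D)=4\le N$. By \cref{lem:forced-mex}\ref{item:forced-mex-converse}, $e_4$ exists and is the minimum-weight edge outside $F$, and there is no edge of weight $3$. Let $c$ be the common vertex and let $a,b$ be the other ends of $e_1,e_2$. The edge $e_4$ cannot join two vertices of the existing component, since that would create a cycle. So either it is disjoint from $\{a,b,c\}$ (Configuration~1), or it meets the component in exactly one vertex, namely $a$ (Configuration~2), $b$ (Configuration~3) or $c$ (Configuration~4). In the pictures, ``meets just the first'' means attaching at the free end of $e_1$, and ``meets both'' means attaching at the shared vertex $c$.

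\emph{Disjoint case.} Here $D=\{1,2\}$, so $m=3$, and $e_3$ exists and is the next edge. The two components are $K_1=e_1$ and $K_2=e_2$. Again $e_3$ cannot close a cycle, so it either touches neither component (Configuration~5), attaches one end to $K_1$ (Configuration~6), attaches one end to $K_2$ (Configuration~7), or joins $K_1$ to $K_2$ (Configuration~8).

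The main obstacle is the symmetry bookkeeping. In Configurations~6 and~7 the attachment vertex could be either endpoint of $e_1$ (respectively $e_2$), and in Configuration~8 there are four endpoint choices. I would argue that these choices are equivalent up to relabeling vertices, because the exposed forest is only recorded up to weighted-forest isomorphism and each choice gives an isomorphic weighted forest; the figure is drawn with exactly this convention. In Configuration~8 the four choices give the same path with weights $1,3,2$ in that order. In Configurations~6 and~7 either endpoint gives a path with weights $1,3$, respectively $2,3$, plus an isolated edge. In the adjacent case, Configurations~2, 3 and~4 are genuinely distinct, giving the weight sequences $4,1,2$ and $1,2,4$ and a star.

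Finally, I would check that every $\mex$ value used lies in $I_N$. The largest is $4$, and $N\ge10$ whenever $n\ge5$. The mention of values ``through $7$'' in the statement presumably refers to later use and needs only $7\le10$. This makes the $2\times4$ enumeration exhaustive.
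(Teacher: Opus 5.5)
Your proposal is correct and follows the paper's proof essentially step for step: you split on whether $e_1,e_2$ meet, obtain the forced edge $e_4$ or $e_3$ from part~\ref{item:forced-mex-converse} of \cref{lem:forced-mex}, run through the four attachment cases while excluding cycles, and use the endpoint swaps of $e_1$ and of $e_2$ to collapse the placements in Configurations~6--8. The one thing the paper adds is the table's last two columns: it computes the internal distance sets and the forced next weights $5,6,5,7$ and $4,5,4,7$, which is where $7\le 10\le N$ is actually used, so that bound is part of the statement itself and not only of later steps.
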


\begin{table}[htbp]
\centering
\small
\begin{tabularx}{\textwidth}{@{}c X l c@{}}
\toprule
Configuration & Three-edge prefix forest & Used distances & Next weight\\
\midrule
1 & path $1$--$2$ and a separate edge $4$ & $\{1,2,3,4\}$ & $5$\\
2 & path $4$--$1$--$2$ & $\{1,2,3,4,5,7\}$ & $6$\\
3 & path $1$--$2$--$4$ & $\{1,2,3,4,6,7\}$ & $5$\\
4 & three-edge star with weights $1,2,4$ & $\{1,2,3,4,5,6\}$ & $7$\\
5 & three separate edges with weights $1,2,3$ & $\{1,2,3\}$ & $4$\\
6 & path $1$--$3$ and a separate edge $2$ & $\{1,2,3,4\}$ & $5$\\
7 & separate edge $1$ and path $2$--$3$ & $\{1,2,3,5\}$ & $4$\\
8 & path $1$--$3$--$2$ & $\{1,2,3,4,5,6\}$ & $7$\\
\bottomrule
\end{tabularx}
\caption{Initial configurations, their internal distance sets, and the next
physical weight forced by part~\ref{item:forced-mex-converse} of
\cref{lem:forced-mex}.}
\label{tab:eight-configs}
\end{table}

\begin{proof}
By \Cref{lem:first-weights}, the physical edges $e_1$ and $e_2$ exist.  First
suppose they meet, say $e_1=ab$ and $e_2=bc$.  Their two-edge path realizes
distance $3$, so no physical edge of weight $3$ exists.  The least missing
distance is $4$.  Since $4\le N$ (indeed $N\ge6$ for $n\ge4$), part
\ref{item:forced-mex-converse} of \cref{lem:forced-mex} gives the physical
edge $e_4$.  Exhaustively, its adjacency set among the two exposed edges is
one of
\[
  \varnothing,\quad \{e_1\},\quad \{e_2\},\quad \{e_1,e_2\}.
\]
In this meeting case the endpoints of $e_1$ are not interchangeable, since
$b$ also lies on $e_2$.  Adjacency set $\{e_1\}$ therefore forces attachment
at $a$, since attaching at $b$ would place $e_2$ in the adjacency set;
likewise $\{e_2\}$ forces attachment at $c$.  For adjacency set
$\{e_1,e_2\}$, the chord $ac$ would close the triangle $abc$, so the shared
vertex is $b$ and the other endpoint is a new vertex.  Each adjacency set
thus determines a single forest.

The resulting forests are Configurations 1 through 4.  Summing weights along
their exposed paths gives respectively
\[
 \{1,2,3,4\},\quad \{1,2,3,4,5,7\},\quad
 \{1,2,3,4,6,7\},\quad \{1,2,3,4,5,6\},
\]
whose least positive missing values are $5,6,5,7$.

Now suppose $e_1=ab$ and $e_2=cd$ are disjoint.  The distance $3$ is not yet
internal.  Since $3\le N$ (again $N\ge6$), part
\ref{item:forced-mex-converse} of \cref{lem:forced-mex} gives the physical
edge $e_3$.  Its adjacency set among the two exposed edges has the same four
possibilities.  The exposed forest admits the automorphisms $a\leftrightarrow
b$ and $c\leftrightarrow d$.  Adjacency set $\{e_1\}$ therefore has two
placements, at $a$ or at $b$, which are exchanged by an automorphism and give
isomorphic forests; the same holds for $\{e_2\}$.  The bridging case
$\{e_1,e_2\}$ has four placements forming a single orbit.  Each adjacency set
again determines a single isomorphism class.  They give Configurations 5
through 8, with distance sets
\[
 \{1,2,3\},\quad \{1,2,3,4\},\quad \{1,2,3,5\},\quad
 \{1,2,3,4,5,6\},
\]
and next missing values $4,5,4,7$.  The physical-weight multiset together
with the internal distance set separates all eight configurations; in
particular, Configurations 1 and 6 both have internal distance set
$\{1,2,3,4\}$, but their physical-weight multisets are respectively
$\{1,2,4\}$ and $\{1,2,3\}$.  The four adjacency sets exhaust each group.
\end{proof}

The eight forms are pairwise non-isomorphic; only their exhaustiveness is used
below.

The Lean development verifies the full statement of
\cref{thm:eight-configs}, including the physical-edge status implicit in each
missing value.

\section{Necessary conditions for completion}\label{sec:conditions}

The recursive search maintains a weighted prefix forest with components
$K_1,\ldots,K_m$, where $m$ is the number of components and the subscripts are
arbitrary component indices.  For $x,y\in K_i$, write
$d_i(x,y)=d_{K_i}(x,y)$.  Let $D$ be the set of all internal prefix distances
over the $m$ components, and let
\[
  H=I_N\setminus D
\]
be the set of \emph{unfilled values}.  Here and below, $i<j$ means
$1\le i<j\le m$.  A \emph{port} of $K_i$ toward $K_j$ is the vertex of
$K_i$ at which the final path toward $K_j$ leaves that component.  The
\emph{route contribution} is the weighted length of the portion of that path
from the port in $K_i$ to the port in $K_j$, including the unexposed edges and
any intervening fixed segments.  The following theorem is the principal
pruning condition that made the exhaustive computation feasible.  Its proof
is conventional mathematics and has not yet been formalized in Lean.

\begin{theorem}[Component-pair whole-block exact cover]
\label{thm:whole-block}
Suppose a valid prefix forest is a weighted subforest of a Leech tree $T$,
meaning that every exposed edge has the same weight in the prefix and in $T$.
For every unordered component pair $i<j$, there are ports $p_{ij}\in K_i$ and
$p_{ji}\in K_j$ and a positive route contribution
$L_{ij}\in\N_{>0}$ such that the \emph{component-pair block}
\[
 B_{ij}=\bigl\{
 d_i(x,p_{ij})+L_{ij}+d_j(p_{ji},y):x\in K_i,\ y\in K_j
 \bigr\}
\]
satisfies all of the following:
\begin{enumerate}[label=(\roman*)]
  \item $|B_{ij}|=|K_i||K_j|$;
  \item $B_{ij}\subseteq H$;
  \item the blocks $B_{ij}$ are pairwise disjoint;
  \item $\bigcup_{i<j}B_{ij}=H$.
\end{enumerate}
\end{theorem}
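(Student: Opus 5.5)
We work directly in the Leech tree $T$ that contains the prefix forest, and derive each block from the geometry of $T$. Fix $i<j$. By \cref{lem:prefix-subtree}, $K_i$ and $K_j$ are vertex-disjoint subtrees of $T$. Consider the unique shortest path in $T$ between the vertex sets $K_i$ and $K_j$. Its endpoints are a vertex $p_{ij}\in K_i$ and a vertex $p_{ji}\in K_j$, and its interior avoids both components. Uniqueness holds because $T$ is a tree and each $K_i$ is connected: if two distinct minimal connecting paths existed, they could be joined through paths inside $K_i$ and $K_j$ to form a cycle. Let $L_{ij}=d_T(p_{ij},p_{ji})$. This is positive because $p_{ij}\neq p_{ji}$, the components being disjoint.

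The key step is the path decomposition. For $x\in K_i$ and $y\in K_j$, the $x$--$y$ path in $T$ consists of three consecutive pieces: the $x$--$p_{ij}$ path, which lies in $K_i$; the connecting path; and the $p_{ji}$--$y$ path, which lies in $K_j$. To justify this, observe that the concatenation of these three pieces is a walk. It is a simple path because the middle piece meets $K_i$ only at $p_{ij}$ and meets $K_j$ only at $p_{ji}$, while the outer pieces stay inside their respective components. Since $T$ is a tree, this simple path is the unique $x$--$y$ path. Applying \cref{lem:prefix-subtree} to the outer pieces then gives
\[
 d_T(x,y)=d_i(x,p_{ij})+L_{ij}+d_j(p_{ji},y).
\]
I expect this step to be the only real point of care. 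One must check that the port chosen for the pair $(K_i,K_j)$ is the same vertex for every $x\in K_i$ and every $y\in K_j$, and that is exactly what the minimal connecting path provides.

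With the formula established, the four properties follow from bijectivity of the distance spectrum $\binom V2\to I_N$.

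(i) The map $(x,y)\mapsto d_T(x,y)$ on $K_i\times K_j$ is injective, since distinct ordered pairs there are distinct unordered vertex pairs. Hence $|B_{ij}|=|K_i||K_j|$.

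(ii) Every value in $B_{ij}$ lies in $I_N$. None of them lies in $D$, because each value of $D$ is realized by a pair inside a single component, and such a pair differs from any cross pair $\{x,y\}$.

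(iii) Blocks for different component pairs arise from disjoint sets of unordered vertex pairs, so injectivity makes them disjoint.

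(iv) The set $\binom V2$ is partitioned into the pairs internal to some component and the cross pairs belonging to some $i<j$. Bijectivity onto $I_N$ therefore gives $I_N=D\sqcup\bigsqcup_{i<j}B_{ij}$, and so $\bigcup_{i<j}B_{ij}=H$.

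Validity of the prefix is not needed beyond the consistency of its weights with $T$.
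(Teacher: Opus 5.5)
Your proof is correct and follows the paper's argument: the same unique minimal connecting path defines the ports, the same three-piece path decomposition gives $d_T(x,y)=d_i(x,p_{ij})+L_{ij}+d_j(p_{ji},y)$, and (i)--(iii) come from injectivity of the distance map. The only difference is in (iv), where you use surjectivity onto $I_N$ directly, while the paper uses the count $\sum_{i<j}|K_i||K_j|=|H|$ together with validity of the prefix. Your version shows that validity is automatic here, whereas the paper's counting form is the one behind the ``disjoint blocks suffice'' corollary used by the search.
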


\begin{proof}
By \Cref{lem:prefix-subtree}, each $K_i$ is a subtree of $T$, and the $K_i$
are pairwise vertex-disjoint.  There is a unique shortest subpath of $T$ that
joins $K_i$ to $K_j$ and has no internal vertex in either subtree.  Write its
endpoints as $p_{ij}\in K_i$ and $p_{ji}\in K_j$.  Uniqueness follows because
two distinct such connecting paths, together with paths inside the connected
subtrees $K_i$ and $K_j$, would form a cycle in $T$.

For arbitrary $x\in K_i$ and $y\in K_j$, concatenate the path in $K_i$ from
$x$ to $p_{ij}$, the connecting path from $p_{ij}$ to $p_{ji}$, and the path
in $K_j$ from $p_{ji}$ to $y$.  The three pieces meet only at their stated
endpoints, so the concatenation is the unique $x$--$y$ path in $T$.  Setting
$L_{ij}=d_T(p_{ij},p_{ji})$ therefore gives
\[
 d_T(x,y)=d_i(x,p_{ij})+L_{ij}+d_j(p_{ji},y).
\]
Positivity of $L_{ij}$ follows because the components are disjoint.

Different indexed vertex pairs in a Leech tree have different distances, so
the block is internally injective and has $|K_i||K_j|$ elements.  These values
belong to $\{1,\ldots,N\}$ and cannot equal a distance already internal to a
prefix component.  Hence $B_{ij}\subseteq H$.  Blocks associated with
different component pairs describe disjoint sets of indexed vertex pairs, so
their values are disjoint.  Finally,
\[
 \sum_{i<j}|K_i||K_j|
 =\binom n2-\sum_i\binom{|K_i|}{2}
 =|H|.
\]
The last equality uses the global pairwise distinctness in the definition of
a valid prefix forest, which gives
$|D|=\sum_i\binom{|K_i|}{2}$.
Thus the disjoint blocks fill $H$ exactly.
\end{proof}

\begin{corollary}[Disjoint blocks suffice]\label{cor:disjoint-blocks-suffice}
Given conditions \textup{(i)} and \textup{(ii)} of
\cref{thm:whole-block} and the counting identity in its proof, condition
\textup{(iii)} alone implies condition \textup{(iv)}.  Hence, to verify the
whole-block condition, it suffices to exhibit a pairwise-disjoint system
consisting of one candidate block per component pair.
\end{corollary}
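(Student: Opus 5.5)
The plan is a pigeonhole argument: disjoint subsets of $H$ whose sizes add up to $|H|$ must fill $H$. Fix the candidate blocks $B_{ij}$, one for each unordered component pair $i<j$. By condition (ii) each lies in $H$, so their union $U=\bigcup_{i<j}B_{ij}$ is a subset of $H$. Because $H\subseteq I_N$ is finite, it is enough to prove $|U|=|H|$; a subset of a finite set with the same cardinality is the whole set.

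We compute $|U|$ in two steps. Condition (iii) says the blocks are pairwise disjoint, so the cardinality of the union is additive: $|U|=\sum_{i<j}|B_{ij}|$. Condition (i) gives $|B_{ij}|=|K_i||K_j|$, and therefore $|U|=\sum_{i<j}|K_i||K_j|$. We then use the counting identity from the proof of \cref{thm:whole-block}. The components partition the vertex set, so $\binom n2=\sum_i\binom{|K_i|}2+\sum_{i<j}|K_i||K_j|$. Validity of the prefix forest makes all internal distances distinct and contained in $I_N$, which gives $|D|=\sum_i\binom{|K_i|}2$ and $D\subseteq I_N$. Hence $|H|=N-|D|=\sum_{i<j}|K_i||K_j|=|U|$, and condition (iv) follows.

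The part needing the most care is the ``hence'' clause. We must make sure that the counting identity uses only the validity of the prefix forest and not the existence of a completion. Otherwise, checking a candidate disjoint system during the search would implicitly assume the very tree whose existence is in question. Validity is a checkable property of the prefix alone, so exhibiting one candidate block per pair that satisfies (i), (ii) and (iii) certifies the entire whole-block condition, with (iv) coming for free.
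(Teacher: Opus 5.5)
Your proposal is correct and follows the paper's own argument: the pairwise-disjoint blocks lie in $H$ and their cardinalities sum to $|H|$, so their union is $H$. Your added check that the counting identity uses only the validity of the prefix (distinct internal distances in $I_N$) and not the existence of a completion matches the paper's remark at the end of the proof of \cref{thm:whole-block}, and it is a useful clarification of the ``hence'' clause.
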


\begin{proof}
The pairwise-disjoint blocks lie in $H$ and their cardinalities sum to $|H|$,
so their union is $H$.
\end{proof}

\begin{remark}\label{rem:not-converse}
The converse of \cref{thm:whole-block} is not claimed.  A block packing may
select ports and route lengths that cannot coexist in one tree.  Passing the
test therefore does not construct a completion.  Failing it proves that a
completion is impossible.
\end{remark}

For each component pair, the implementation enumerates all port pairs and all
positive translations whose complete additive block is injective and lies in
$H$.  The per-pair enumeration is exhaustive and needs no budget: route
contributions lie in $\{1,\ldots,153\}$ and port pairs number
$|K_i||K_j|\le81$: the components are disjoint and their total size is at
most $18$, so their product is maximized when both have size $9$.  Thus the
candidate count for one component pair is at most $153\cdot81=12{,}393$.
Consequently, a prefix is rejected if one component
pair has no candidate, and a cap-truncated enumeration is never treated as
empty.  At selected small component counts, the program also searches for a
pairwise-disjoint choice of one candidate per component pair.  Only this
global search is budgeted: exhaustive failure rejects the prefix, whereas
candidate-cap or search-budget exhaustion returns \code{UNKNOWN/PASS}, never
rejection.  Each filter is one-sided: it rejects only on a proof of
infeasibility.  A resource limit is not such a proof, so budget exhaustion
returns no information and the branch is retained.  Rejection on resource
exhaustion would not be sound.

On every invocation of the whole-block layer, the implementation checks the
counting identity
\[
  \sum_{i<j}|K_i||K_j|=|H|,
\]
together with the per-block conditions
\[
  |B_{ij}|=|K_i||K_j|
  \quad\text{and}\quad
  B_{ij}\subseteq H.
\]
A violation raises a fatal internal error.  These checks were active
throughout production runs totaling $8{,}567{,}320{,}605$ recursive node
visits, and no assertion failed.

The Lean development also proves a selected-edge gluing identity that places
\cref{thm:whole-block} in a polynomial form.  Let $z$ be an indeterminate and
define the \emph{distance polynomial} of $T$ by
\[
 P_T(z)=\sum_{\{x,y\}\in\binom V2}z^{d_T(x,y)}.
\]
If selected physical edges are deleted from $T$, leaving components $K_i$, let
\[
 P_i(z)=\sum_{\{x,y\}\in\binom{K_i}{2}}z^{d_i(x,y)}
\]
be the internal distance polynomial of $K_i$, and, for a port $p\in K_i$, let
\[
 R_{i,p}(z)=\sum_{x\in K_i}z^{d_i(x,p)}.
\]
This is the \emph{rooted depth polynomial} of $K_i$ at $p$.  For the extreme
ports on the route between $K_i$ and $K_j$, let $L_{ij}$ have the same meaning
as in \cref{thm:whole-block}.  Then
\begin{equation}\label{eq:gluing}
 P_T(z)=\sum_iP_i(z)+
 \sum_{i<j}z^{L_{ij}}R_{i,p_{ij}}(z)R_{j,p_{ji}}(z).
\end{equation}
For a Leech tree, the left side is $z+z^2+\cdots+z^N$.  Equality here is
\emph{coefficientwise}, meaning that the coefficients of every power of $z$
agree.  By \Cref{lem:prefix-subtree}, the terms $P_i$ record genuine
$T$-distances.  Since every term on the right of \cref{eq:gluing} has
nonnegative coefficients and their sum is $z+z^2+\cdots+z^N$, every $P_i$ and
every cross product has all coefficients in $\{0,1\}$, and their supports are
pairwise disjoint and partition $\{1,\ldots,N\}$.  This is a forward identity
for ports arising from a tree $T$, not a realization theorem for arbitrary
polynomials.

\begin{remark}
The coefficientwise identity \cref{eq:gluing} is the generating-function form
of \cref{thm:whole-block}; the two state the same exact-cover fact in different
language.
\end{remark}

\section{Exhaustive search and certificates}\label{sec:search}

\subsection{Recursive enumeration}\label{subsec:recursive-enumeration}

A \emph{search state} is a valid weighted prefix forest on all $18$ vertices,
together with its internal distance data.  Vertices not yet incident with an
exposed edge are singleton components.  A \emph{child} of a state is a valid
one-edge extension produced by the next forced weight.  At a nonterminal valid
state having a completion, at least two components remain, so the internal
distance set omits some value of $I_{153}$ and its least positive missing value
is at most $153$.  Part~\ref{item:forced-mex-converse} of
\cref{lem:forced-mex} therefore determines the next physical weight.  The
program then considers every pair of distinct components and every choice of
one attachment vertex in each.  Adding the forced-weight edge merges exactly
two components.  A \emph{weighted-forest automorphism} is a permutation of the
vertices that preserves adjacency and every exposed edge weight.  It acts on
candidate extensions, and an \emph{automorphism orbit} is an equivalence class
under this action.  The program retains one candidate edge from each exact
orbit, represented by canonical rooted and unrooted codes.

\begin{lemma}[Unique predecessor]\label{lem:unique-predecessor}
Every non-root search-state isomorphism class has a unique predecessor
isomorphism class.  Consequently, distinct branches of the search do not
reconverge after orbit reduction.
\end{lemma}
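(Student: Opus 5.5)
The predecessor of a non-root state $S$ will be obtained by deleting its unique maximum-weight exposed edge. I will show that this deletion is the only way $S$ can arise as a child, and that the result is invariant under weighted-forest isomorphism.

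First, let $q$ be the largest exposed weight in $S$. Exposed weights are pairwise distinct, because validity makes every internal distance distinct and each exposed edge realizes its own weight. Hence the edge $e$ of weight $q$ is unique. Next, suppose $S$ is a child of some state $S'$. Then $S$ is $S'$ plus one edge $f$, and $f$ carries the forced weight $\mex(D(S'))$. I claim that $w(f)=q$, so $f=e$. Suppose instead that $S'$ contained an edge of weight at least $w(f)$. I would argue inductively along the search path that the forced weights strictly increase along any branch. The inductive step is that if $S'$ has internal distance set $D'$ and a new edge of weight $m=\mex(D')$ is added, then every value below $m$ already lies in $D'\subseteq D(S)$. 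Moreover $m$ itself becomes an internal distance. So the next $\mex$ of $D(S)$ is strictly greater than $m$. At the roots, the eight configurations of \cref{thm:eight-configs} have largest edge weight below the next forced weight listed in \cref{tab:eight-configs}, which starts the induction. Consequently every state reached by the search has all its exposed weights below its own $\mex$, and the most recently added edge is the maximum-weight one. Hence $S'=S-e$ is determined by $S$ alone.

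Second, I will pass to isomorphism classes. If $\phi:S_1\to S_2$ is a weighted-forest isomorphism, it preserves edge weights, so it maps the maximum-weight edge of $S_1$ to that of $S_2$. Therefore $\phi$ restricts to an isomorphism $S_1-e_1\cong S_2-e_2$, and the predecessor class is well defined. The roots are the eight configurations. Any non-root state in the search has at least four exposed edges, since it is obtained from a root by adding edges, and the roots have exactly three. So deleting edges repeatedly is well founded, and it terminates at a three-edge state. That three-edge state is one of the roots by \cref{thm:eight-configs}. The eight roots are pairwise non-isomorphic, so each non-root class lies below exactly one root.

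The consequence about branches now follows. The search retains one representative per automorphism orbit of extensions of a given state. Two different parent classes cannot yield isomorphic children, because an isomorphism between the children would induce an isomorphism between the parents. Two non-equivalent extensions of the same parent also cannot yield isomorphic children. To see this, note that an isomorphism $\psi$ between the two children maps the new edge to the new edge and restricts to an automorphism of the parent, and $\psi$ carries one candidate edge to the other. The two candidates would then lie in the same orbit, contrary to the assumption.

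The main obstacle I expect is this last step. It requires that the program's orbit notion, computed via canonical rooted and unrooted codes, coincide exactly with orbits under the full weighted-forest automorphism group of the parent. I would state this as the defining property of the canonical codes, namely that two candidates receive equal codes if and only if they lie in the same orbit. The proof would then rely on that property rather than re-verifying the implementation.
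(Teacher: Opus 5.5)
Your proposal is correct and follows essentially the same route as the paper: validity makes the exposed weights distinct, forced weights strictly increase so the most recently added edge is the unique maximum-weight edge, and weight-preserving isomorphisms carry that edge to its counterpart and so induce an isomorphism of predecessors. Two of your steps fill in details the paper leaves implicit: the explicit check that each root's edge weights lie below its forced weight, and the sibling argument (an isomorphism between two children of one parent restricts to a parent automorphism carrying one candidate edge to the other, so exact orbit reduction keeps only one).
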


\begin{proof}
In a valid search state, every exposed edge weight is an internal distance
between the endpoints of that edge.  Validity therefore makes the exposed
edge weights pairwise distinct.  Moreover, the edges are added in strictly
increasing forced-weight order: if the current forced weight is
$m=\mex(D)$, then all positive values below $m$ already belong to $D$, and
after the new edge is added, $m$ also belongs to the new internal-distance
set.  The next positive missing value is thus greater than $m$.

Every non-root state consequently has a unique largest-weight exposed edge,
namely its most recently added edge.  Deleting that edge recovers its
predecessor.  Any weighted-forest isomorphism between two child states
preserves edge weights, so it carries the unique largest-weight edge of one
state to that of the other.  Deleting those two edges therefore induces an
isomorphism between the predecessors.  Hence the predecessor is well defined
on search-state isomorphism classes and is unique, so two distinct branch
histories cannot later merge into one isomorphism class.
\end{proof}

For every child, the program recomputes the complete new cross-distance block.
The frozen production search uses exactly five rejecting tests:
\begin{enumerate}[label=(\roman*)]
  \item two internal distances are equal;
  \item an internal distance lies outside $I_{153}$;
  \item the $7/11$ parity profile is impossible;
  \item an exposed path has more than $14$ physical edges;
  \item the component-pair whole-block necessary condition fails.
\end{enumerate}
For test~\textup{(iii)}, each current component has a bipartition determined
by distance parity, and the two classes may be swapped independently from one
component to another.  The test rejects only if no choice of these swaps gives
global class sizes $7$ and $11$.  Test~\textup{(iv)} rejects exactly when two
vertices in one current component are joined by an exposed path of hop length
greater than $14$.  In test~\textup{(v)}, rejection occurs if some component
pair has no candidate block, or if an exhaustive within-budget exact-cover
search proves that no pairwise-disjoint choice of one block per component pair
exists.  Candidate-cap or search-budget exhaustion returns
\code{UNKNOWN/PASS}.  Hall variants were disabled.  No experimental or shadow
test contributed to the result.

\begin{lemma}[Filter soundness]\label{lem:filter-soundness}
Let $F$ be a search state that is a weighted subforest of some order-$18$
Leech tree.  Then no filter of \cref{subsec:recursive-enumeration} rejects
$F$.
\end{lemma}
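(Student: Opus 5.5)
The plan is to check the five rejecting tests one at a time against a fixed Leech tree $T$ of order $18$ that contains $F$ as a weighted subforest. Each test will be shown to express a property that every such subforest must have. By \cref{lem:prefix-subtree}, every component $K$ of $F$ is a subtree of $T$, and $d_K(x,y)=d_T(x,y)$ for all $x,y\in K$. Every internal distance of $F$ is therefore a genuine $T$-distance of a distinct unordered vertex pair.

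Tests (i) and (ii) follow immediately. The Leech property makes $\{x,y\}\mapsto d_T(x,y)$ a bijection onto $I_{153}$. Hence distinct internal pairs, in the same component or in different components, have distinct distances, and all of these distances lie in $I_{153}$.

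For test (iii), I would use the parity classes of $T$ described after Taylor's restriction. Fix a root $o$. Then $V$ splits into classes of sizes $7$ and $11$, and $d_T(x,y)$ is odd exactly when $x$ and $y$ lie in different classes. Restricted to a component $K$, the global classes give a bipartition of $K$. Because $d_K=d_T$ on $K$, this restriction coincides with the bipartition of $K$ determined by internal distance parity, up to swapping its two sides. Choosing in each component the swap that matches the global classes produces class sizes $7$ and $11$, so the test finds an admissible choice and does not reject.

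For test (iv), an exposed path inside a component is a simple path in $T$. \Cref{lem:hop-diameter} then bounds its hop length by $14$.

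Test (v) is the main obstacle, because of its two sub-modes and the budgeting. First, $F$ is a valid prefix forest by (i) and (ii), and it is a weighted subforest of $T$, so \cref{thm:whole-block} applies. It provides, for each component pair $i<j$, ports $p_{ij},p_{ji}$ and a route contribution $L_{ij}\in\N_{>0}$, with $L_{ij}\le 153$ because $L_{ij}=d_T(p_{ij},p_{ji})$. The resulting block is injective, contained in $H$, and pairwise disjoint from the other blocks. This true block belongs to the range that the per-pair enumeration covers exhaustively, namely all port pairs and all translations in $\{1,\ldots,153\}$ whose block is injective and lies in $H$. So no pair has an empty candidate list, and the enumeration is never truncated into a false empty result. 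For the global mode, the system of true blocks is a pairwise-disjoint choice of one candidate per component pair. An exhaustive exact-cover search therefore cannot report infeasibility. If the budget or cap is exhausted, the output is \code{UNKNOWN/PASS} and is not a rejection. Hence test (v) never rejects $F$.

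The delicate point is that soundness here is a statement about the specification: the filter rejects only on an exhaustive proof of emptiness. That the C++ code faithfully implements this specification lies outside the lemma, and I would state this explicitly.
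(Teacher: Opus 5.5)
Your proposal is correct and follows the paper's proof essentially step for step: \cref{lem:prefix-subtree} for tests (i) and (ii), restriction of the global $7/11$ parity classes to each component for (iii), \cref{lem:hop-diameter} for (iv), and \cref{thm:whole-block} together with the fail-open budget policy for (v). Two of your remarks make explicit points the paper leaves implicit: that (i) and (ii) make $F$ valid, so the theorem applies, and that $L_{ij}=d_T(p_{ij},p_{ji})\le 153$ places the true block inside the exhaustively enumerated range.
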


\begin{proof}
The distance-repeat and out-of-range tests do not trigger because, by
\cref{lem:prefix-subtree}, internal prefix distances are genuine distances of
the Leech tree, so they are distinct and belong to $I_{153}$.  The hop test
does not trigger by \cref{lem:hop-diameter}.  For the parity profile,
\cref{lem:prefix-subtree} shows that the two global
distance-parity classes restrict on each component to its two internal parity
classes, up to swapping them.  The completion therefore supplies a choice of
component swaps with class sizes $7$ and $11$.  Finally,
\cref{thm:whole-block} supplies a candidate block for every component pair and
a pairwise-disjoint exact cover, so neither form of whole-block rejection can
occur.  Resource exhaustion in that layer passes the state.  Thus none of the
five production tests rejects $F$.
\end{proof}

\begin{proposition}[Search completeness]\label{prop:search-complete}
Let $C$ be one of the eight three-edge configurations.  If some order-$18$
Leech tree has $C$, up to weighted-tree isomorphism, as its three
smallest-weight physical edges, then the search rooted at $C$ reaches an
accepting leaf.
\end{proposition}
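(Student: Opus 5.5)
We argue by induction along the chain of smaller-weight prefix forests of the hypothetical tree $T$, keeping track of an isomorphic copy of each prefix among the retained search states. Fix an order-$18$ Leech tree $T$ whose three smallest-weight physical edges form $C$, and let $q_3<q_4<\cdots<q_{17}$ be its physical weights in increasing order, with $q_1=1$, $q_2=2$ and $q_3$ as in \cref{tab:eight-configs}. For $k\ge3$ let $F_k=F_{<q_k+1}$ be the prefix containing exactly the $k$ smallest edges. The inductive claim is this: for every $k$ with $3\le k\le 17$, the search rooted at $C$ retains a state $S_k$ together with a weighted-forest isomorphism $\phi_k\colon S_k\to F_k$. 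The base case $k=3$ is \cref{thm:eight-configs}, since $C$ is itself the root state and $F_3\cong C$ by hypothesis.

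For the inductive step, assume $S_k\cong F_k$ via $\phi_k$ with $k<17$. We first check that $S_k$ is a nonterminal state that the program expands. It is not rejected, because $F_k$ is a weighted subforest of $T$ and \cref{lem:filter-soundness} applies to it. Every filter depends only on the weighted forest up to isomorphism, so it applies equally to $S_k$. Since $F_k$ has at least two components, its internal distance set $D$ misses a value of $I_{153}$, and by part~\ref{item:forced-mex-converse} of \cref{lem:forced-mex} the search's forced weight $m=\mex(D)$ is exactly $q_{k+1}$. The uniqueness and minimality clauses of that lemma identify the edge of weight $m$ with the $(k+1)$-st smallest edge $e=ab$. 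This edge joins two distinct components of $F_k$, because $T$ is acyclic.

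The program enumerates every pair of components and every choice of attachment vertices, so the candidate $\phi_k^{-1}(a)\phi_k^{-1}(b)$ with weight $m$ occurs among its candidates. The resulting child $S_k+\phi_k^{-1}(e)$ is isomorphic to $F_{k+1}$ via the extension of $\phi_k$. Orbit reduction keeps one candidate $c'$ from the orbit of this candidate under $\operatorname{Aut}(S_k)$. If $\sigma$ is an automorphism carrying $c'$ to our candidate, then $\phi_{k+1}=\phi_k\circ\sigma$, suitably extended, is an isomorphism from the retained child $S_{k+1}$ to $F_{k+1}$. This child also passes the five filters by \cref{lem:filter-soundness}, which closes the induction. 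At $k=17$ the state $S_{17}\cong T$ is a single component with distance set $I_{153}$, and it passes every test, so it is an accepting leaf.

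The main obstacle is the orbit-reduction step. We must make sure that the program's ``canonical rooted and unrooted codes'' identify exactly the orbits of $\operatorname{Aut}(S_k)$ acting on candidate edges, so that the retained representative really lies in the same orbit. A coarser identification could discard a non-isomorphic child, while a finer one is harmless. We therefore state explicitly that retaining a representative of each exact orbit suffices, and treat correct canonical coding as part of the computational trust base. \Cref{lem:unique-predecessor} is not needed for completeness; it only ensures that nothing is double-counted. A second subtlety is that each filter must be invariant under weighted-forest isomorphism. This holds because every test is phrased in terms of distances, components, and hop lengths.
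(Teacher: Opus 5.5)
Your proof is correct and follows the paper's argument: an induction along the exposed edges, with part~\ref{item:forced-mex-converse} of \cref{lem:forced-mex} identifying the next edge, acyclicity placing its endpoints in distinct components, orbit reduction keeping an isomorphic child, \cref{lem:filter-soundness} preventing pruning, and the complete tree accepted at depth $17$. The only difference is bookkeeping: the paper carries the invariant ``the state has a Leech completion'' and transports $T$ by the orbit permutation $\tau$, so \cref{lem:filter-soundness} applies verbatim to the retained child, whereas you carry $\phi_k$ and appeal to isomorphism invariance of every filter, which is more than you need (and is delicate for the budget-limited exact-cover layer), since $S_k$ is already a weighted subforest of the Leech tree $\phi_k^{-1}(T)$.
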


\begin{proof}
We induct on the number of exposed edges, with the hypothesis that the current
state has at least one Leech completion.  At the root, this holds after
relabeling the tree supplied by the assumption.

Suppose the state $F$ has a Leech completion $T$.  Since a nonterminal forest
has at least two components and $F$ is valid, its least positive missing
internal distance $m$ satisfies $m\le153=N$.  By part
\ref{item:forced-mex-converse} of \cref{lem:forced-mex}, $m$ is the weight of
a physical edge $e$ of $T$ outside $F$, so the program's forced weight is
correct.  The endpoints of $e$ lie in distinct components of $F$; otherwise,
the path between them in that component together with $e$ would form a cycle
in $T$.  The endpoints are therefore among the component pairs and attachment
vertices enumerated by the program.  Hence some enumerated child $c$ satisfies
$c\subseteq T$, and $c$ has a Leech completion.

Orbit reduction retains a representative $c'$ weighted-forest isomorphic to
$c$.  Let $\tau$ be the corresponding permutation of the common vertex set.
Then $\tau(T)$ is a tree containing
$c'$ whose distance multiset equals that of $T$, so $\tau(T)$ is a Leech tree
and $c'$ has a Leech completion.  The inductive hypothesis therefore transfers
to the retained representative.  By \Cref{lem:filter-soundness}, $c'$ is not
pruned.

After $17$ physical edges, the forest is a tree on $18$ vertices with
$\binom{18}{2}=153$ vertex pairs.  If its $153$ distances are pairwise
distinct and all lie in $\{1,\ldots,153\}$, they exhaust that set, so the state
is a Leech tree and the leaf accepts.
\end{proof}

\begin{remark}
The proposition delivers a tree isomorphic to the hypothesized one, not the
tree itself; orbit reduction does not preserve vertex identities.  For a
nonexistence conclusion this is sufficient, since existence is invariant
under isomorphism.
\end{remark}

The proposition is a mathematical statement about the specified algorithm.
The implementation of canonicalization, recursion, and result reporting
remains in the trusted computational base.

\subsection{Partitioning and receipt discipline}\label{subsec:partitioning}

In this subsection, the \emph{fan-out} of a state means its
post-cross-distance canonical children, rather than its raw component-pair and
attachment choices.  A \emph{frontier} is the set of states at a fixed
recursion depth, where the depth is the number of exposed physical edges.  A
\emph{partition root} is a state chosen as the root of a separately executed
recursive subtree.  A set of partition roots is \emph{prefix-free} if no
selected root is a descendant of another selected root.

For a split state $s$, let $C(s)$ be the set of canonical children surviving
the mandatory cross-distance collision and range checks.  The
\code{child_max} value records $|C(s)|$, not the number of raw component-pair
and attachment choices.  Candidates excluded before \code{child_max} is
recorded fail necessary validity conditions and cannot contain a Leech
completion, by \cref{lem:filter-soundness}.  The children in $C(s)$ must be
divided into three disjoint classes: $S(s)$ consists of
children whose subtrees are covered by ordinary searched partition roots;
$Z(s)$ consists of children closed by separate exact-zero evidence; and
$R(s)$ consists of children rejected at the split by a proved necessary
condition.  Exhaustive coverage requires
\[
  C(s)=S(s)\mathbin{\dot\cup}Z(s)\mathbin{\dot\cup}R(s)
\]
at every split state.  A rejection placed in $R(s)$ is sound by
\cref{lem:filter-soundness}: a child having a Leech completion cannot fail any
of the implemented necessary conditions.

Prefix-freeness and coverage are different assertions.  Prefix-freeness says
only that the recorded partition pieces do not overlap.  Coverage is the
displayed equality: every canonical child must lead to searched roots, have
separate exact-zero evidence, or be soundly rejected.  Repeated work above a
partition root affects performance counters but neither assertion.

The released coverage verifier reads the \code{child_max} fields along all
$39{,}672$ receipt paths in the unified eight-configuration plan.  While a run
is following its prescribed path, this field gives the exact
post-cross-distance canonical fan-out of each traversed split state.  The
verifier reconstructs a prefix trie for each of the nine solver modes,
checks for conflicting or unknown fan-outs, children outside the measured
range, and nodes that are simultaneously internal and terminal, and compares
the recorded child indices with $\{0,\ldots,f-1\}$ at every split of measured
fan-out $f$.  It also requires every planned leaf to have an exact
\code{ZERO} receipt with matching raw output and empty standard error.

The plan labels $39{,}178$ receipts as \code{primary_cover}, $464$ as
\code{coverage_completion}, and $30$ as
\code{direct_replay_completion}.  These labels record how the final cover was
assembled; all $39{,}672$ entries are ordinary uncapped terminal searches in
the released package.  The verifier proves prefix-freeness and complete
fan-out coverage separately.  It does not execute a pruning filter, and it
reads the \code{child_max} values produced by the search program rather than
independently regenerating the canonical children or verifying the orbit
canonicalization.

Each terminal solver process recomputes the least positive missing internal
distance of its three-edge seed and compares it with the value recorded for
that configuration.  A mismatch terminates with an internal-error status and
produces no result record.

A \emph{search piece} is the recursive subtree below one partition root.  Its
\emph{receipt} is the recorded terminal metadata binding the selected prefix,
program arguments, hashes, counters, exit status, and result.  The status
\code{ZERO} means that the entire selected subtree terminated with zero
accepting leaves and zero abandoned branches.  A search piece closed a branch
only when it produced a terminal exact \code{ZERO} result with exit code $0$,
the expected arguments and hashes, a positive node count, zero solutions, and
complete receipt metadata.  A timeout, resource limit, missing receipt,
nonzero exit, abnormal termination, or incomplete wrapper record counted as
non-evidence.  Such pieces were excluded or replaced by complete child covers.

The exact-cover layer also failed open.  Its $100$-state local budget and its
candidate cap could only return \code{UNKNOWN/PASS}.  The recorded production
arguments disable both Hall variants, so no Hall check contributes to the
zero result.  These policies separate an exhaustive branch-closing
computation from a heuristic search.

\section{Computational results}\label{sec:results}

\Cref{tab:results} gives the complete exhaustive result.

\begin{table}[htbp]
\centering
\small
\begin{tabular}{@{}rrr l@{}}
\toprule
Configuration & Receipts & Node visits & Result\\
\midrule
1 & 5,202 & 1,321,606,322 & exact zero\\
2 & 79 & 193,281,350 & exact zero\\
3 & 47 & 167,742,832 & exact zero\\
4 & 1,324 & 227,134,788 & exact zero\\
5 & 25,684 & 4,242,085,009 & exact zero\\
6 & 3,983 & 1,165,724,556 & exact zero\\
7 & 3,301 & 1,010,043,695 & exact zero\\
8 & 52 & 239,702,053 & exact zero\\
\midrule
\textbf{Total} & \textbf{39,672} & \textbf{8,567,320,605} &\\
\bottomrule
\end{tabular}
\caption{Exhaustive computations for all eight configurations in the unified
release plan.  The plan contains $39{,}672$ exact-zero terminal receipts.
Node visits include repeated prefix work and are not globally unique states.}
\label{tab:results}
\end{table}

\begin{table}[htbp]
\centering
\small
\begin{tabularx}{\textwidth}{@{}l l X@{}}
\toprule
Plan class & Receipts & Role in the final cover\\
\midrule
\code{primary_cover} & 39,178 & Primary prefix-free partition leaves\\
\code{coverage_completion} & 464 & Additional leaves required by measured fan-out\\
\code{direct_replay_completion} & 30 & Direct terminal searches completing Configuration~4\\
\bottomrule
\end{tabularx}
\caption{Coverage classes in the clean execution plan.  Every row represents
uncapped terminal searches with exact-zero receipts.}
\label{tab:evidence-groups}
\end{table}

The unified plan contains $39{,}672$ search receipts arranged in $208$
bundles.  Configuration~3 is represented directly by seven leaves in mode
\code{a2_attached} and forty leaves in mode \code{a2_separate}.  The other
configuration totals are shown in \cref{tab:results}.  Every terminal solver
run had no node, depth, or stop-depth cap and ended with zero accepting leaves
and zero abandoned branches.  The released verifier checks every receipt,
raw output, empty error stream, bundle binding, and the complete nine-mode
coverage trie.  No external certificate or separate evidence package is
needed for any configuration.  See \Cref{app:reproducibility}.

\begin{proposition}[Certified exhaustion]\label{prop:certified-exhaustion}
For each of the eight initial configurations, every canonical search branch
is either exhausted by a terminal search, closed by explicit exact-zero
evidence, or rejected by a proved necessary condition.  No accepting leaf or
abandoned branch remains.
\end{proposition}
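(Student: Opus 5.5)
This proposition is a statement about the recorded computation. I would therefore prove it by an induction over the search tree of each configuration, using the coverage and receipt data described in \cref{subsec:partitioning}. Fix one of the eight configurations $C$ from \cref{thm:eight-configs} and consider the canonical search tree rooted at $C$ produced by the procedure of \cref{subsec:recursive-enumeration}. By \cref{lem:unique-predecessor}, after orbit reduction this is a genuine rooted tree: each canonical state has one predecessor, so branches do not reconverge. A state is therefore identified by its path of child indices from the root, and this is what the coverage trie records. The claim then becomes: every maximal path from the root either enters the subtree of some partition root with an exact \code{ZERO} receipt, or ends at a child placed in $Z(s)$ or $R(s)$.

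The key steps, in order, are as follows. First, for each of the nine solver modes, form the trie of receipt paths and use the verifier's check that the set of partition roots is prefix-free and contains no node that is both internal and terminal. Second, prove by induction on depth that every canonical state on or above the trie frontier is covered. At a split state $s$ of measured fan-out $f=|C(s)|$, the verifier confirms that the recorded child indices are exactly $\{0,\dots,f-1\}$. It also confirms that these indices split disjointly as $S(s)\,\dot\cup\,Z(s)\,\dot\cup\,R(s)$. Children in $S(s)$ are covered inductively. Children in $Z(s)$ are closed by their exact-zero evidence. Children in $R(s)$ are soundly discarded by \cref{lem:filter-soundness}. Candidates removed before \code{child_max} is recorded fail validity, so by the same lemma they cannot carry a completion. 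Third, at each trie leaf, the terminal receipt with status \code{ZERO} certifies that the whole subtree below it was run without node, depth, or stop-depth caps and finished with zero accepting leaves and zero abandoned branches. Budget exhaustion in the whole-block layer only ever passes a state, so no branch was lost to it. Fourth, the seed check in each terminal process ties every receipt to the correct configuration, and Configuration~3 is handled directly by its $7+40$ leaves. Taking the union over the eight configurations and the $39{,}672$ receipts gives the statement.

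The main obstacle is the gap between what the verifier checks and what the proposition asserts. The coverage verifier trusts the \code{child_max} values emitted by the solver and does not regenerate canonical children or validate the canonical codes. I would therefore make that dependence an explicit hypothesis on the trusted base. The hypothesis is that along each prescribed path, \code{child_max} equals the true post-cross-distance canonical fan-out, and that the program's index order is the same across the independent runs that share a prefix. The first thing I would establish is this cross-run consistency. The verifier's rejection of conflicting fan-outs at common trie nodes already gives part of it, since two runs that disagree on a shared prefix would be detected. Deterministic enumeration order, fixed by the source hashes, then makes the child index a well-defined name for a canonical state. With that in place, the induction in the second step goes through, and the proposition follows relative to the stated trust boundary.
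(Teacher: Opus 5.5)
Your proposal is correct and is essentially the paper's argument. The receipt verifier certifies that each of the $39{,}672$ planned leaves is an uncapped exact \code{ZERO} run, and the prefix-free, fan-out-complete coverage trie for all nine solver modes shows that every canonical child at every split is covered or soundly rejected, relative to the trusted solver-generated \code{child_max} data; you state the trie induction and the cross-run index-consistency assumption more explicitly than the paper's brief appeal to these two verifier checks.
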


\begin{proof}
The receipt verifier checks all $39{,}672$ terminal receipts and their raw
outputs.  The coverage verifier reconstructs the post-cross-distance
canonical fan-outs from the recorded metadata and proves a prefix-free,
complete trie cover for each of the nine solver modes representing the eight
configurations.  Every planned leaf has exact status \code{ZERO}, exit code
$0$, zero solutions, and no abandoned branch.  Consequently, at every split,
each canonical child is either covered by one of these terminal searches or
rejected by a necessary condition as described in
\cref{subsec:partitioning}.  No branch is closed merely by prefix-freeness,
and no accepting leaf or abandoned branch remains.
\end{proof}

\section{Proof of the main theorem}\label{sec:main-proof}

\begin{proof}[Proof of \Cref{thm:main}]
Suppose an order-$18$ Leech tree exists.  By
\Cref{thm:eight-configs}, its first three physical edges lie in one of
Configurations 1 through 8.  By \Cref{prop:search-complete}, the search rooted
at that configuration reaches an accepting leaf.  The filters used in the
corresponding search are necessary conditions, and fail-open resource policies
cannot remove a genuine completion.  Yet \Cref{prop:certified-exhaustion}
states that every canonical search branch in every configuration is exhausted
by a terminal search, closed by explicit exact-zero evidence, or rejected by a
proved necessary condition, with no accepting leaf or abandoned branch
remaining.  This is a contradiction.
\end{proof}

\section{Formal status and trust boundary}
\label{sec:formal-status}

\subsection{Trust boundary}\label{subsec:trust-boundary}

The Lean development is available at \leanartifact, tag \code{v1.0.0}.  The
computational artifact repository is available at \computationalartifact, tag
\code{v1.0.0}.  The exact evidence objects used in this paper are identified
by filename and \sha{} digest in \cref{app:reproducibility}; the repository
release record and manifest identify the packaged copy containing them.
The Lean development uses Lean~4 version~4.24.0 and Mathlib version~4.24.0 and
builds successfully.
The Mathlib commit is
\begin{center}
\small\code{f897ebcf72cd16f89ab4577d0c826cd14afaafc7}
\end{center}
It contains no \code{sorry}, \code{admit}, or \code{native_decide}.  The axiom
audit for the declarations corresponding to the Lean-verified rows of
\cref{tab:formal-status} uses only \code{propext},
\code{Classical.choice}, and \code{Quot.sound}.

\begin{table}[H]
\centering
\small
\begin{tabularx}{\textwidth}{@{}>{\raggedright\arraybackslash}X
                                  >{\raggedright\arraybackslash}X@{}}
\toprule
Ingredient & Verification status\\
\midrule
Taylor order restriction & Lean verified; Taylor's result retains its attribution \citep{Taylor1977}\\
Order-$18$ parity split & Lean verified\\
Hop-diameter bound & Lean verified; self-contained proof included\\
First physical weights & Lean verified\\
Forced least-missing distance & Lean verified\\
Persistent merge block & Lean verified\\
Eight-configuration exhaustiveness & Lean verified; proof included\\
Selected-edge gluing identity & Lean verified\\
Component-pair whole-block exact-cover theorem & Conventional proof; not Lean verified\\
Search-completeness proposition & Conventional proof; not Lean verified\\
C++ search and certificate generation & Computational result; not Lean verified\\
Certificate checking and global collection & Computational result; not Lean verified\\
\bottomrule
\end{tabularx}
\caption{Formal and computational status of the proof ingredients.}
\label{tab:formal-status}
\end{table}

\paragraph{Lean declaration map.}
\begin{description}[style=nextline,leftmargin=0pt,labelsep=0pt,itemsep=0.5em]
\item[First physical weights]
\small\nolinkurl{LeechTrees.Foundation.T1_physical_weights_one_two}
\item[Forced least-missing distance]
\small\nolinkurl{LeechTrees.Foundation.PosIntTree.t2_forced_mex}
\item[Persistent merge block]
\small\nolinkurl{LeechTrees.Foundation.T2_forced_mex_merge_persistence}
\item[Taylor order restriction]
\small\nolinkurl{LeechTrees.Foundation.t3_taylor_order_condition}
\item[Order-$18$ parity split]
\small\nolinkurl{LeechTrees.Foundation.t3_order18_class_sizes}

\small\nolinkurl{LeechTrees.Foundation.T3_taylor_parity_order18}
\item[Hop-diameter bound]
\small\nolinkurl{LeechTrees.QHop.order18_simplePath_length_le_14}
\item[Eight-configuration exhaustiveness]
\small\nolinkurl{LeechTrees.Foundation.FirstEdgeDossier.eightRowDossier_of_weights}

\small\nolinkurl{LeechTrees.Foundation.FirstEdgeDossier.firstEdge_eightRowDossier}
\item[Selected-edge gluing identity]
\small\nolinkurl{LeechTrees.PathMulticut.actual_selectedEdge_gluing_polynomial}
\end{description}

The component-pair whole-block theorem and the search-completeness proposition
are ordinary mathematical arguments in this paper.  The exhaustive search,
certificates, and certificate checkers are not Lean verified.  The final
result is therefore a computer-assisted proof, not an end-to-end Lean proof.

The trusted computational foundation consists of the frozen C++ generation of
children and distances, the implementation of the production filters, the
canonical codes and automorphism-orbit reduction, the recorded search
executions, and the programs that write and collect receipts.  The separate
verification scripts re-read receipts, totals, plans, digests, and recorded
fan-outs, but the coverage verifier obtains each fan-out from solver-generated
\code{child_max} data.  It does not reconstruct the canonical child set from
the mathematical state.  Canonical-child generation therefore remains within
the trusted foundation.  These checks are not an independent reimplementation
and are not kernel-checked computation.

\section{Future work}\label{sec:future}

The next objective is to reduce the trusted computational base.  Future work
will formalize \cref{thm:whole-block}, the prefix-state invariant, and the
forced recursive enumeration; verify the canonical-orbit reduction or replace
it with a certificate format that does not require trusting it; implement a
small Lean checker for the eight branch covers and final zero receipts; and
connect the kernel-checked structural reduction directly to the archived
certificates.

\appendix
\section{Reproducibility identifiers}\label[appendix]{app:reproducibility}

The clean artifact maps its nine solver modes to the eight mathematical
configurations as follows.

\begin{table}[htbp]
\centering
\small
\begin{tabularx}{\textwidth}{@{}l X X@{}}
\toprule
Configuration & Solver mode & Terminal receipts\\
\midrule
1 & \code{g001_row0} & 5,202\\
2 & \code{g001_row1} & 79\\
3 & \code{a2_attached}, \code{a2_separate} & $7+40$\\
4 & \code{g001_row3} & 1,324\\
5 & \code{g001_row4} & 25,684\\
6 & \code{g001_row5} & 3,983\\
7 & \code{g001_row6} & 3,301\\
8 & \code{g001_row7} & 52\\
\bottomrule
\end{tabularx}
\caption{Mode-to-configuration mapping in the clean release.}
\label{tab:execution-mapping}
\end{table}

The clean execution plan contains $39{,}672$ terminal records in $208$
bundles.  Of these, $39{,}178$ are labelled \code{primary_cover}, $464$ are
labelled \code{coverage_completion}, and $30$ are labelled
\code{direct_replay_completion}.  All are uncapped terminal searches with
exact result \code{ZERO}.  The global result records\\
\code{all_eight_configurations_exact_zero=true} and binds the eight
per-configuration results and the complete coverage result.

\subsection{Implementation validation}

The released rebuild check compiles all four included programs from the
archived C++ source and runs the $45$-check solver regression, the $11$-check
independent witness-checker test, the small-order test, and a known partition
that visits $32{,}841$ nodes.  These are implementation tests, not a second
proof of the order-$18$ result.  The included ELF binaries identify their
compiler as GCC 11.5.0 20240719 (Red Hat 11.5.0-5) on x86-64 Linux with
glibc 2.34; the exact C++20 compilation flags and binary hashes are recorded
in \path{runtime/runtime_manifest.json}.
\subsection{Reproducibility package and integrity checks}

A \sha{} digest identifies the exact bytes of a file.  Matching digests
establish file identity, not program correctness or search completeness.

Here a \emph{branch package} is the self-contained evidence for one part of
the search: its partition plan lists the subtree roots to be searched, its
terminal receipts record the result for every root, and its verification files
check the resulting evidence.  The word \emph{frozen} means that these inputs
and receipts were fixed before the final collection and verification rather
than edited afterward.  Prefix-freeness, coverage, and their logical
distinction are defined in \cref{subsec:partitioning}.  The \emph{global
record} collects the accepted branch results and records whether all eight
configurations have been closed.

The computational artifact repository is available at
\computationalartifact, tag \code{v1.0.0}.  The complete evidence is the
release asset
\begin{center}
\path{leech18_computational_evidence_v1.0.0.tar.gz},
\end{center}
whose size is $24{,}204{,}350$ bytes and whose \sha{} digest is
\begin{center}
\small\code{3b53ced3f8d0b8aff0798d446fdb6256e3ad688429df090a7f915849b59d23b8}.
\end{center}
This single archive contains the plan, source, frozen binaries, all $208$
bundle receipts, all $39{,}672$ terminal leaf receipts and raw outputs, the
eight configuration results, the coverage result, the global result, and the
verification program.  No configuration depends on a separate evidence
archive.

Within the archive, \path{MANIFEST.sha256} binds every other file and has
\sha{} digest
\begin{center}
\small\code{49c6bf74747c97d3a9da8defdecef289f83ca10f372ad66e3e61015b96c072cf}.
\end{center}
The clean execution plan has digest
\begin{center}
\small\code{c99a5deb2d22fa54367c66141ae2c456e301d01bbcc913c6fbd1c694eb02a62e},
\end{center}
the runtime manifest has digest
\begin{center}
\small\code{fee90418f103b7e3fdbf7afc4ce34ba043cb649204616fd0e940abc4ae3dbdb0},
\end{center}
the coverage result has digest
\begin{center}
\small\code{098083c68b020578e946ebee17fe909685e9f6d211c538ac301e89d4ec041ccf},
\end{center}
and the global result has digest
\begin{center}
\small\code{c2d2b6aca2868deed891f91f07f348a2f4c63403adf4754a74781684c8f0f017}.
\end{center}

\paragraph{Reproduction.}
After extracting the release archive, the complete eight-configuration
verifier is invoked from the artifact root by
\begin{lrbox}{\reproducecommandbox}
\ttfamily\verb|python3 verification/verify_release.py verify-release --root .|
\end{lrbox}
\begin{center}
\resizebox{\textwidth}{!}{\usebox{\reproducecommandbox}}
\end{center}
Its expected final marker is
\begin{center}
\texttt{\detokenize{CLEAN_RELEASE_VERIFY_OK records=39672 configurations=8}}
\end{center}
This command checks the manifest, privacy rules, plan and runtime bindings,
all bundles, all leaf receipts and raw outputs, exact-zero status, and
exhaustive prefix-tree coverage for all nine modes.  It checks the saved
evidence and does not repeat the $8.5$-billion-node search.  The separate
rebuild and single-leaf replay commands are documented in \path{VERIFY.md}.
Successful digest checks establish file identity, while successful
verification checks the stated plan, coverage, receipt, and package
conditions.  Neither establishes program correctness nor removes the
trusted-code limitations described in \cref{subsec:trust-boundary}.

\section*{Declaration of AI assistance}

Generative AI tools made substantive contributions to the research process. They assisted with mathematical exploration and the development and checking of arguments; the design, implementation, and review of the exhaustive search and its verification workflow; the analysis of computational results; and the organization, editing, and LaTeX preparation of the manuscript. AI-generated outputs were not accepted as evidence on their own. The claims in this paper rest on the mathematical arguments presented here, the Lean-verified results, and the released computational evidence. The author made the final research decisions, reviewed and approved the work, and accepts full responsibility for every claim in the paper.

\begingroup
\footnotesize
\setlength{\bibsep}{1pt}
\bibliographystyle{abbrvnat}
\bibliography{references}
\endgroup

\end{document}